    \newcommand{\BC}{{\bold {C}}}
     \newcommand{\BZ}{{\bold {Z}}}
    \newcommand{\CC}{{\mathcal {C}}}
    \newcommand{\fg}{{\mathfrak{g}}} \newcommand{\fh}{{\mathfrak{h}}}
    \newcommand{\fm}{{\mathfrak{m}}}
    \newcommand{\Aut}{{\mathrm{Aut}}}
\newtheorem{thm}{Theorem}[subsection]
\newtheorem{defn}[thm]{Definition}
\newtheorem{prop}[thm]{Proposition}
\newtheorem{rmk} [thm]{Remark}
\newtheorem{lem}[thm]{Lemma}
\newtheorem{cor}[thm]{Corollary}
\newcommand{\ol}{\overline}
    \newcommand{\pair}[1]{\langle {#1} \rangle}
    \newcommand{\Poincare}{Poincar\'{e}~}
    \numberwithin{equation}{section}
\begin{document}
\title{On the convergence of the Kac-Moody Correction Factor}
\author{Yanze Chen$^*$}
\address{Departmemt of Mathematics, the Hong Kong University of Science and Technology, Clear Water Bay, Kowloon, Hong Kong}
\email{ychenen@connect.ust.hk}
\thanks{$*$This research is supported by Hong Kong RGC grant 16301718}
\begin{abstract}
	The Kac-Moody correction factor, first studied by Macdonald in the affine case, corrects the failure of an identity found by Macdonald in finite-dimensional root systems in 1972. Subsequntly this factor appeared in several formulas in the affine or Kac-Moody analogue of  $p$-adic spherical theory for reductive groups. In this article we view the inverse of this correction factor as a function, prove the convergency and holomorphy of this function on a certain domain. 
\end{abstract}
\maketitle
\section{Introduction}
\subsection{Kac-Moody Correction Factor}
Let $(V,\Delta=\{a_1,\cdots,a_l\},V^*,\Delta^\vee)$ be a Kac-Moody root system, $\Phi_{re}$ be the set of real roots in $V$, $\Phi_{re,+}$ be the set of positive real roots, $W$ be the Weyl group with length function $\ell$. Then $$\Delta_{re,t}:=\prod_{a\in\Phi_{re,+}}(1-te^{-a})$$ is a unit in the ring $\BZ[[t]][[Q_-]]:=\BZ[[t]][[e^{-a_1},\cdots,e^{-a_l}]]$, and $$\frac{\Delta_{re,t}}{\Delta_{re,1}}=\prod_{a\in\Phi_{re,+}}\frac{1-te^{-a}}{1-e^{-a}}$$ is an element in the same ring by formally expanding all the denominators in terms of negative roots. There is no reasonable $W$-action on this ring, but we can formally define $w\left(\frac{\Delta_{re,t}}{\Delta_{re,1}}\right)$ by setting \begin{equation}
	w\left(\frac{\Delta_{re,t}}{\Delta_{re,1}}\right)=\left(\prod_{b\in\Phi(w)}\frac{t-e^{-b}}{1-te^{-b}}\right)\cdot\left(\frac{\Delta_{re,t}}{\Delta_{re,1}}\right)
\end{equation} where $\Phi(w)=\{b\in\Phi_{re,+}:w^{-1}b<0\}$. Note that $w\left(\frac{\Delta_{re,t}}{\Delta_{re,1}}\right)$ is an element in $\BZ[[t]][[Q_-]]$ since the set $\Phi(w)$ is finite.  The reason we make this definition is the following formal computation: \begin{align}
	&\prod_{a\in\Phi_{re,+}}\frac{1-te^{-wa}}{1-e^{-wa}}= \left(\prod_{a\in\Phi_{re,+},wa<0}\frac{1-te^{-wa}}{1-e^{-wa}}\right)\left(\prod_{a\in\Phi_{re,+},wa>0}\frac{1-te^{-wa}}{1-e^{-wa}}\right)\\\nonumber
	&=\left(\prod_{b\in\Phi_{re,+},w^{-1}b<0}\frac{1-te^{b}}{1-e^{b}}\right)\left(\prod_{b\in\Phi_{re,+},w^{-1}b>0}\frac{1-te^{-b}}{1-e^{-b}}\right)\\\nonumber
	&=\left(\prod_{b\in\Phi(w)}\frac{1-te^b}{1-e^b}\cdot\frac{1-e^{-b}}{1-te^{-b}}\right)\cdot\left(\frac{\Delta_{re,t}}{\Delta_{re,1}}\right)= \left(\prod_{b\in\Phi(w)}\frac{t-e^{-b}}{1-te^{-b}}\right)\cdot\left(\frac{\Delta_{re,t}}{\Delta_{re,1}}\right)
\end{align}
\par It can be proved that $\sum_{w\in W}w\left(\frac{\Delta_{re,t}}{\Delta_{re,1}}\right)$ is a well-defined element in the ring $\BZ[[t]][[Q_-]]$ (see \cite{MPW}), and the "constant term" of $\sum_{w\in W}w\left(\frac{\Delta_{re,t}}{\Delta_{re,1}}\right)$ is the \Poincare series $W(t):=\sum_{w\in W}t^{\ell(w)}$ of the Coxeter group $W$, i.e. if we set all the formal exponentials $e^{-a}$ to be zero, then the summation just gives $W(t)$. It is known that $W(t)\in1+t\BZ[[t]]$ is "rational", namely it is the quotient of two polynomials in $\BZ[t]$ (c.f. \cite{St}). In particular, $\sum_{w\in W}w\left(\frac{\Delta_{re,t}}{\Delta_{re,1}}\right)$ is a unit in the ring $\BZ[[t]][[Q_-]]$, so there exists $\fm\in\BZ[[t]][[Q_-]]$ such that $$\fm\sum_{w\in W}w\left(\frac{\Delta_{re,t}}{\Delta_{re,1}}\right)=W(t)$$ If the Kac-Moody root system $(V,\Delta)$ is of finite type, then Macdonald proved that $\fm=1$ in \cite{Mac1}. In the affine case, $\fm$ is first studied also by Macdonald in \cite{Mac2}, where he considered possibly non-reduced affine root systems and gave explicit formulas for $\fm$ case by case. Note that the computation is equivalent to the Macdonald constant term conjecture \cite{Mac3}, which is proved by Cherednik \cite{Ch}. In the general Kac-Moody case $\fm$ is called the \textbf{Kac-Moody correction factor}. The affine correction factor appears in the affine analogue of the Gindikin-Karpelevich formula \cite{BFK,BGKP}, the Macdonald formula \cite{BKP}, and the Casselman-Shalika formula \cite{P} in the works towards a generalization of $p$-adic spherical theory to affine Kac-Moody groups. Subsequently the Kac-Moody correction factor also appeared in the generalizations of the above works to arbitrary $p$-adic Kac-Moody groups \cite{BPGR1,BPGR2,PP}. 
\par The Kac-Moody correction factor $\fm$ was first studied in \cite{MPW} and later in \cite{LLO}. In \cite{MPW} the authors considered a formal product expansion $$\fm=\prod_{\lambda\in Q_-}\prod_{n\geq0}(1-t^ne^\lambda)^{-m(\lambda,n)}$$ of the correction factor, gave a generalized Peterson algorithm to compute $\fm$ recursively, and they showed that $m_\lambda(t)=\sum_{n\geq0}m(\lambda,n)t^n$ are polynomials and gave a closed formula for this polynomial. In the paper \cite{LLO} the authors proved that $\fm^{-1}$ times a certain infinite product over imaginary roots can be expressed as an infinite $\BZ[[t]]$-linear combination of irreducible characters of the corresponding Kac-Moody algebra, and they studied the coefficients. Note that in \cite{MPW,LLO}, the correction factor was treated as a formal power series.
\par In this paper we will study the inverse $\fm^{-1}$ of the correction factor from an analytic perspective.  In view of the further goal of generalizing the $p$-adic spherical theory and even the theory of automorphic forms to Kac-Moody groups, eventually this correction factor should be "evaluated" on the parameters of a principal series representation. Our main result is that $\fm^{-1}$ converges to a holomorphic function on the union of all $W$-translates of the domain $V+iC$ in $V\otimes\BC=V+iV$ (throughout this article, for subsets $A,B$ of $V$,  $A+iB$ denotes the set $\{x+iy\in V\otimes\BC=V+iV:x\in A,y\in B\}$), where $C$ is the dominant chamber in $V$, and it can be analytically continued to a holomorphic function on the domain $\Omega=V+iI^\circ$, where $I^\circ$ is the interior of the Tits cone. Note that the characters of irreducible highest weight modules of a symmetrizable Kac-Moody Lie algebra with dominant integral highest weights converges absolutely to a holomorphic function on the domain $\Omega$ \cite{KP,Lo}. This motivates us to study the inverse of the Kac-Moody correction factor as a complex analytic function on the domain $\Omega$. 
\subsection{The Inverse of the Correction Factor as an Analytic Function}In this section we will  describe the ideas of the proof of our main result. Note that we are going to change into the coroot version of the correction factor in accordance with the formulas in \cite{BKP}, \cite{BPGR2}, \cite{PP}, etc. 
\par By the definition (1.1), the inverse of the correction factor is $$\fm^{-1}=W(t)^{-1}\left(\sum_{w\in W}\prod_{b\in\Phi(w)}\frac{t-e^{-b^\vee}}{1-te^{-b^\vee}}\right)\cdot\left(\prod_{a\in\Phi_{re,+}}\frac{1-te^{-a^\vee}}{1-e^{-a^\vee}}\right)$$We view the formal exponential $e^{-a^\vee}$ for $a\in\Phi_{re,+}$ as a function on $V\otimes\BC$ whose value on $h\in V\otimes\BC$ is $e^{2\pi i\pair{h,a^\vee}}$, so we hope that \begin{equation}
	\CC(t,h)=\left(\sum_{w\in W}\prod_{b\in\Phi(w)}\frac{t-e^{2\pi i\pair{h,b^\vee}}}{1-te^{2\pi i\pair{h,b^\vee}}}\right)\left(\prod_{a\in\Phi_{re,+}} \frac{1-te^{2\pi i\pair{h,a^\vee}}}{1-e^{2\pi i\pair{h,a^\vee}}}\right)
\end{equation} defines a function on a subset of $\BC\times(V+iV)$. 
\par The main work of this article is to prove that $\CC(t,h)$ "defines" a complex analytic function for $t\in D_r$ and $h\in\Omega$, where $r$ is the radius of convergence of $W(t)$ and $D_r$ is the open disk $\{t\in\BC:|t|<r\}$. More precisely, we proved that the above defined $\CC(t,h)$ is absolutely convergent for $(t,h)\in D_r\times(V+iW\cdot C)$, uniformly on compact subsets, where $W\cdot C\subseteq V$ is the union of all $W$-translates of $C$. It is an open dense subset of $\Omega$, so $\CC(t,h)$ is a holomorphic function on $D_r\times(V+iW\cdot C)$. Then we proved that $\CC(t,h)$ can be analytically continued to a holomorphic function on the domain $D_r\times\Omega$. 
\par Now we begin to sketch the proof of the main result. We start by proving that $\CC(t,h)$ is absolutely convergent if $|t|<r$ and $h=x+iy\in V+iC$ where $C$ is the dominant chamber $$C=\{y\in V:\pair{y,a^\vee}>0,\,\forall a\in\Delta\}$$ Indeed, the second bracket of (1.3) can be bounded by a geometric progression, and the first bracket of (1.3) can be compared to $W(t)$ since as the depth of the root $b$ tends to $\infty$, we have $|e^{2\pi i\pair{h,b^\vee}}|\to0$ and thus $\frac{t-e^{2\pi i\pair{h,b^\vee}}}{1-te^{2\pi i\pair{h,b^\vee}}}\to t$. Since $|\Phi(w)|=\ell(w)$, the term corresponding to $w\in W$ in the summation of the first bracket is close to $t^{\ell(w)}$ when the positive root $b$ goes deep. Moreover, this convergence is uniform on compact sets, so $\CC(t,h)$ converges to a holomorphic function on the domain $D_r\times(V+iC)$.
\par Note that the formal computation in (1.2) implies that for $(t,h)\in D_r\times(V+iC)$ we have  $$\prod_{a\in\Phi_{re,+}} \frac{1-te^{2\pi i\pair{h,wa^\vee}}}{1-e^{2\pi i\pair{h,wa^\vee}}}=\left(\prod_{b\in\Phi(w)}\frac{t-e^{2\pi i\pair{h,b^\vee}}}{1-te^{2\pi i\pair{h,b^\vee}}}\right)\left(\prod_{a\in\Phi_{re,+}} \frac{1-te^{2\pi i\pair{h,a^\vee}}}{1-e^{2\pi i\pair{h,a^\vee}}}\right)$$ Namely, the infinite product on LHS is well-defined since RHS is absolutely convergent.  Moreover, for $(t,h)\in D_r\times (V+iC)$ we also have $$\sum_{w\in W}\prod_{a\in\Phi_{re,+}} \frac{1-te^{2\pi i\pair{h,wa^\vee}}}{1-e^{2\pi i\pair{h,wa^\vee}}}=\left(\sum_{w\in W}\prod_{b\in\Phi(w)}\frac{t-e^{2\pi i\pair{h,b^\vee}}}{1-te^{2\pi i\pair{h,b^\vee}}}\right)\left(\prod_{a\in\Phi_{re,+}} \frac{1-te^{2\pi i\pair{h,a^\vee}}}{1-e^{2\pi i\pair{h,a^\vee}}}\right)$$ namely the LHS summation is absolutely convergent uniformly on compact sets since RHS is so. Thus we have $$\CC(t,h)=\sum_{w\in W}\prod_{a\in\Phi_{re,+}} \frac{1-te^{2\pi i\pair{h,wa^\vee}}}{1-e^{2\pi i\pair{h,wa^\vee}}}$$ for $(t,h)\in D_r\times(V+iC)$. This is a series of summation over $W$, so by $W$-invariance $\CC(t,h)$ actually defines a holomorphic function on $D_r\times(V+iW\cdot C)$ 
\par However, $V+iW\cdot C\neq\Omega$ unless $(V,\Delta)$ is of finite type. In order to analytically continue the function $\CC(t,h)$ to the whole $\Omega$ we need to use the detailed description of $\Omega$ given by Looijenga in \cite{Lo}. We call a subset $X\subseteq\Delta$ \textbf{$W$-finite} if the subgroup $W_X$ in $W$ generated by fundamental reflections corresponding to simple roots in $X$ is finite. Following \cite{Lo}, for such a subset $X$ we define $$F_X=\{x\in V:\pair{x,a^\vee}=0\text{ for }a\in X,\,\pair{x,a^\vee}>0\text{ for }a\in \Delta-X \}$$ Then $\Omega$ is the union of $V+iW\cdot F_X$ for all $W$-finite $X\subseteq\Delta$. Moreover we have $F_X\subseteq\ol F_Y$ iff. $Y\subseteq X$, so $V+iW\cdot C$ is an open dense subset of $\Omega$. We then extend $\CC(t,h)$ to a holomorphic function on $D_r\times\Omega$. 
\par For simplicity, in this introduction we will only consider the situation $h\in V+iF_X$, since other cases can be treated by applying $W$-translations. The formula (1.3) can not be used to define a function $\CC(t,h)$ for $h\in V+iF_X$ because in the second bracket of (1.3) the denominator can be zero for some $h\in V+iF_X$. But it turns out that if we apply Macdonald's identity (see (1.5) below) for the finite-dimensional root system $(V,X)$, the poles will cancel. Let's return to the formal sum $$\sum_{w\in W}w\left(\prod_{a\in\Phi_{re,+}}\frac{1-te^{-a^\vee}}{1-e^{-a^\vee}}\right)$$ and consider another decomposition of it. For each $W$-finite $X\subseteq\Delta$, we first sum over $W_X$, and makle a formal computation as follows: 
\begin{align}
	&\sum_{w\in W}w\left(\prod_{a\in\Phi_{re,+}}\frac{1-te^{-a^\vee}}{1-e^{-a^\vee}}\right)=\sum_{w_1\in W^X}\sum_{w_2\in W_X}\prod_{a\in\Phi_{re,+}}\frac{1-te^{-w_1w_2a^\vee}}{1-e^{-w_1w_2a^\vee}}\\\nonumber&=\sum_{w_1\in W^X}\sum_{w_2\in W_X}\prod_{a\in \Phi_{X,+}}\frac{1-te^{-w_1w_2a^\vee}}{1-e^{-w_1w_2a^\vee}}\prod_{a\in\Phi_{re,+}-\Phi_{X,+}} \frac{1-te^{-w_1w_2a^\vee}}{1-e^{-w_1w_2a^\vee}}\\\nonumber&=\sum_{w_1\in W^X}w_1\left(\sum_{w_2\in W_X}\prod_{a\in\Phi_{X,+}}\frac{1-te^{-w_2a^\vee}}{1-e^{-w_2a^\vee}}\right)\cdot\prod_{a\in\Phi_{re,+}-\Phi_{X,+}}\frac{1-te^{-w_1a^\vee}}{1-e^{-w_1a^\vee}}
\end{align} where $W^X$ is the set of minimal length representatives for $W/W_X$, $\Phi_{X,+}$ is the set of positive roots for the root system $(V,X)$, and the last equality is because each $w_2\in W_X$ permutes the positive roots in $\Phi_{re,+}-\Phi_{X,+}$ since the positive roots inverted by $w_2$ all lies in $\Phi_{X,+}$. A main result in \cite{Mac1} is that \begin{equation}
		\sum_{w_2\in W_X}\prod_{a\in \Phi_{X,+}}\frac{1-te^{-w_2a^\vee}}{1-e^{-w_2a^\vee}}=W_X(t)	
 \end{equation} where $W_X(t)$ is the \Poincare polynomial of the Coxeter group $W_X$, which is a polynomial in $t$ with integral coefficients since $W_X$ is finite, thus the RHS of (1.5) is equal to \begin{align}
	&W_X(t)\sum_{w_1\in W^X} \prod_{a\in\Phi_{re,+}-\Phi_{X,+}}\frac{1-te^{-w_1a^\vee}}{1-e^{-w_1a^\vee}}\\\nonumber
	&=W_X(t)\left(\sum_{w\in W^X}\prod_{b\in\Phi(w)}\frac{t-e^{-b^\vee}}{1-te^{-b^\vee}}\right)\left(\prod_{a\in\Phi_{re,+}-\Phi_{X,+}}\frac{1-te^{-a^\vee}}{1-e^{-a^\vee}}\right)
\end{align}In this formal expression, every term in the denominator has no zero points in $V+iF_X$. So we define \begin{equation}
	\CC_X(t,h)=W_X(t)\left(\sum_{w\in W^X}\prod_{b\in\Phi(w)}\frac{t-e^{2\pi i\pair{h,b^\vee}}}{1-te^{2\pi i\pair{h,b^\vee}}}\right)\left(\prod_{a\in\Phi_{re,+}-\Phi_{X,+}}\frac{1-te^{2\pi i\pair{h,a^\vee}}}{1-e^{2\pi i\pair{h,a^\vee}}}\right)
\end{equation} We can prove the convergence of RHS of (1.7). Our proof is similar to that of the convergence of $\CC(t,h)$, but is technically more involved. In the proof we need to use a relation between the root systems $(V,\Delta)$ and $(V,X)$. Also the formal computation (1.6) yields 
 \begin{equation}
	\CC_X(t,h):= W_X(t)\sum_{w\in W^X}\prod_{a\in\Phi_{re,+}-\Phi_{X,+}}\frac{1-te^{2\pi i\pair{h,wa^\vee}}}{1-e^{2\pi i\pair{h,wa^\vee}}}
\end{equation}
\par Moreover, RHS of (1.7) is also convergent on each $V+iF_Y$ with $Y\subseteq X$, and the convergence is uniform on compact subsets of $V+i(\bigcup_{Y\subseteq X}F_Y)$. By absolute convergency we have $\CC(t,-)=\CC_X(t,-)$ on $V+iC$.  Thus $\CC_X(t,h)$ can be viewed as an extension of the function $\CC(t,h)$ to the boundary component $V+i(\bigcup_{Y\subseteq X}F_Y)$ of $I^\circ$. But this is not an open set, so we do not get an analytic continuation at this point. 
\par In order to get an analytic continuation, we recall from \cite{Lo} that the \textbf{star} of $F_X$ is the set $$S_X=\bigcup_{w\in W_X}\bigcup_{Y\subseteq X}wF_Y$$ This is open (\cite{Lo}), and we consider the open domain $V+iW\cdot S_X$ in $I^\circ$. By the $W$-invariance of $\CC_X(t,-)$, we can prove that $\CC_X(t,h)$ actually defines a $W$-invariant holomorphic function on $V+iW\cdot S_X$, hence is an analytic continuation of $\CC(t,-)$. By putting all the $\CC_X$ together, we have defined a holomorphic function on $\Omega$. 
\par This article will be organized as follows: in section 2 we will recall Looijenga's study of the structure of the interior of the Tits cone, and in section 3 we will prove the main theorem following the above recipe. 
\subsection{Acknowledgments} I would like to thank my supervisor Yongchang Zhu for suggesting this interesting problem and for enormous helpful discussions and encouragements during this work. 
\section{Kac-Moody Root Systems and Interior of the Tits Cone}
In this section we briefly review Kac-Moody root systems, and then review the description of the interior of the Tits cone following \cite{Lo}. We also recall some properties of Coxeter groups which will be used in section 3. 
\subsection{Kac-Moody Root Systems}
\begin{defn}
	A \textbf{Kac-Moody root system} is a quadruple $(V,\Delta,V^*,\Delta^\vee)$ where \begin{itemize}
		\item $V$ is a finite-dimensional real vector space. 
		\item $\Delta$ is a finite linearly independent subset of $V$. 
		\item $V^*$ is the linear dual of $V$. 
		\item $\Delta^\vee$ is a finite linearly independent subset of $V$ with a bijection $\vee:\Delta\to\Delta^\vee$. We will denote the image of $a\in\Delta$ under $\vee$ by $a^\vee$, and denote the image of a subset $X\subseteq\Delta$ under $\vee$ by $X^\vee$.  
	\end{itemize}satifsying the following axioms: \begin{enumerate}[(R1)]
		\item $\pair{a,a^\vee}=2$ for $a\in\Delta$, where $\pair{-,-}$ is the natural pairing between $V$ and $V^*$. 
		\item $\pair{a,b^\vee}$ is a non-positive integer for $a\neq b\in\Delta$. 
		\item $\pair{a,b^\vee}=0$ implies $\pair{b,a^\vee}=0$. 
	\end{enumerate}
\end{defn} If $\Delta\neq0$, we usually suppose $\Delta=\{a_1,\cdots,a_l\}$. Elements in $\Delta$ are sometimes called \textbf{simple roots}. Note that $(V^*,\Delta^\vee,V,\Delta)$ is also a Kac-Moody root system, called the \textbf{dual} of $(V,\Delta,V^*,\Delta^\vee)$. From now on, we will loosely refer to a Kac-Moody system only by the pair $(V,\Delta)$.
\begin{rmk}
	Irreducible Kac-Moody root systems up to isomorphism corresponds to generalized Cartan matrices (c.f.\cite{Kac}). 
\end{rmk}
\begin{defn}
	For $a\in\Delta$, define the corresponding \textbf{fundamental reflection} $s_a:V\to V$ by $s_a(v)=v-\pair{v,a^\vee}a$. The \textbf{Weyl group} $W$ of the Kac-Moody root system $(V,\Delta)$ is the subgroup of $\Aut(V)$ generated by the fundamental reflections corresponding to simple roots. We let $W$ acts on $V^*$ by $\pair{v,wl}=\pair{w^{-1}v,l}$ for $v\in V,l\in V^*,w\in W$. 
\end{defn}
\begin{defn}
	A \textbf{real root} in $V$ is an element which is $W$-conjugate to a simple root. Let $\Phi_{re}$ be the set of real roots in $V$, $\Phi_{re,+}$ be the subset of elements in $\Phi_{re}$ that can be written as a $\BZ_{\geq0}$-linear combination of simple roots, then a standard result states that $\Phi_{re}$ is the disjoint union of $\Phi_{re,+}$ and $-\Phi_{re,+}$. Elements in $\Phi_{re,+}$ are called \textbf{positive real roots}. 
\end{defn}
\begin{rmk}
	In \cite{Lo}, Looijenga used the word "root" for what is called real root here, because the imaginary roots do not play a role here. 
\end{rmk}
\subsection{The interior of the Tits cone}
\par Let $C=\{x\in V: \pair{x,a^\vee}>0$ for all simple roots $a\in\Delta\}$. Recall that the Tits cone is defined to be the subset $I=\bigcup_{w\in W}w\ol C$ of $V$. Let $I^\circ$ be the interior of the Tits cone. We will give a concrete description of $I^\circ$ in this section following \cite{Lo}. 
\par  For each subset $X\subseteq \Delta$ define $$F_X:=\{x\in V:\pair{x,a^\vee}=0\text{ for }a\in X,\,\pair{x,a^\vee}>0\text{ for }a\in \Delta-X \}$$ then clearly we have a disjoint decomposition $\ol C=\bigsqcup_{X\subseteq\Delta}F_X$ and $F_X\subseteq \ol{F_Y}$ iff $Y\subseteq X$. In particular we have $F_\phi=C$. A \textbf{facet} is defined to be a $W$-translate of $F_X$ for some $X\subseteq \Delta$.  For $X\subseteq\Delta$, the \textbf{star} of $F_X$, denoted $S_X$, is defined to be the union of all facets which contains $F_X$ in the closure.  We recall some results from \cite{Lo}:
\begin{lem}[\cite{Lo} (1.3)]
	If $w\in W$, $X\subseteq \Delta$ are such that $wF_X\cap\ol C\neq\phi$, then $w\in W_X$ and hence $w$ leaves $F_X$ pointwise fixed. In particular, $\ol C$ is a fundamental domain of the action of $W$ on $I$. 
\end{lem}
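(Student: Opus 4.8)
The plan is to reduce the lemma to a single assertion proved by induction on the length $\ell(w)$: \emph{if $x,x'\in\ol C$ satisfy $x'=wx$ for some $w\in W$, then $x=x'$ and $w$ lies in the subgroup generated by those fundamental reflections $s_a$ with $\pair{x,a^\vee}=0$.} Granting this, the lemma is immediate. If $wF_X\cap\ol C\neq\phi$, choose $x\in F_X$ with $x':=wx\in\ol C$; since $F_X\subseteq\ol C$ and, by the very definition of $F_X$, the set $\{a\in\Delta:\pair{x,a^\vee}=0\}$ equals $X$, the assertion gives $x=x'$ and $w\in W_X$. Each generator $s_a$ ($a\in X$) fixes every $y\in F_X$ because $s_a y=y-\pair{y,a^\vee}a=y$, so $W_X$, and hence $w$, leaves $F_X$ pointwise fixed. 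Finally, $\ol C$ being a fundamental domain for $W$ acting on $I$ is exactly the content of the equality $x=x'$: every point of $I$ lies in some $w\ol C$ by definition of $I$, and two points of $\ol C$ in the same $W$-orbit coincide.

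For the induction, the base case $\ell(w)=0$ is trivial. Assume $\ell(w)\geq1$ and choose a simple root $a$ with $\ell(s_aw)=\ell(w)-1$ (possible by taking a reduced expression for $w$); by the standard length/root characterization this means $w^{-1}a\in-\Phi_{re,+}$. The key computation is that $\pair{x',a^\vee}=0$. Using the $W$-action on $V^*$ and the $W$-equivariance of the map $\vee$ on real roots, $\pair{x',a^\vee}=\pair{wx,a^\vee}=\pair{x,w^{-1}a^\vee}$, and $w^{-1}a^\vee=-\beta^\vee$ with $\beta:=-w^{-1}a\in\Phi_{re,+}$. Writing the positive real coroot $\beta^\vee$ as a nonnegative integer combination $\sum_b c_b b^\vee$ of simple coroots, the inequality $x\in\ol C$ gives $\pair{x,w^{-1}a^\vee}=-\sum_b c_b\pair{x,b^\vee}\leq0$, while $x'\in\ol C$ forces $\pair{x',a^\vee}\geq0$; hence $\pair{x',a^\vee}=0$.

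Consequently $s_ax'=x'$, so $(s_aw)x=x'$ with $\ell(s_aw)<\ell(w)$, and both $x,x'$ still lie in $\ol C$. By the induction hypothesis, $x=x'$ and $s_aw$ lies in the subgroup generated by the $s_b$ with $\pair{x,b^\vee}=0$. Since $\pair{x,a^\vee}=\pair{x',a^\vee}=0$, the reflection $s_a$ belongs to this subgroup as well, and therefore so does $w=s_a(s_aw)$. This closes the induction and proves the assertion.

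The step that carries the real content is the root-theoretic input in the inductive step: one needs both the characterization $\ell(s_aw)<\ell(w)\iff w^{-1}a\in-\Phi_{re,+}$ for the Kac-Moody Weyl group, and the compatibility of the $W$-actions on roots and coroots, so that $w^{-1}a^\vee$ is genuinely the coroot of $w^{-1}a$ and a positive real coroot is a nonnegative integer combination of simple coroots. The latter follows by transporting the decomposition $\Phi_{re}=\Phi_{re,+}\sqcup(-\Phi_{re,+})$ to the dual Kac-Moody root system $(V^*,\Delta^\vee)$. Everything else is bookkeeping with the defining inequalities of $\ol C$, so I expect this root-combinatorial point to be the only place where the structure theory of $W$ is essential.
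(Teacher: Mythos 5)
Your proof is correct. Note first that the paper itself does not prove this lemma at all: it is imported verbatim from Looijenga \cite{Lo}, (1.3) (the proof environment that follows it in the source belongs to the next lemma, about the star $S_X$). So there is no in-paper argument to compare against; what you have written is the classical Tits/Bourbaki induction on $\ell(w)$ (it is also Proposition 3.12 in Kac's book), and it goes through verbatim in the Kac--Moody setting of Definition 2.1.1. The reduction of the lemma to the statement ``$x,x'\in\ol C$, $x'=wx$ $\Rightarrow$ $x=x'$ and $w\in\langle s_a:\pair{x,a^\vee}=0\rangle$'' is right, since for $x\in F_X$ the set of simple roots $a$ with $\pair{x,a^\vee}=0$ is exactly $X$ and each generator of $W_X$ visibly fixes $F_X$ pointwise. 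In the inductive step the sign computation is the crux and is done correctly: $\ell(s_aw)<\ell(w)$ forces $w^{-1}a\in-\Phi_{re,+}$, hence $\pair{x',a^\vee}=\pair{x,w^{-1}a^\vee}=-\pair{x,\beta^\vee}\le 0$ while $x'\in\ol C$ gives $\ge 0$, so $s_a$ fixes $x'$ and the induction closes. You are also right to flag the two genuine inputs -- the length criterion $\ell(s_aw)<\ell(w)\iff w^{-1}a<0$ and the $W$-equivariance of $\vee$ together with the fact that $\beta>0\iff\beta^\vee>0$ (positivity transported to the dual root system $(V^*,\Delta^\vee,V,\Delta)$); both are standard. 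The only cosmetic caveat is that $\ol C=\{x:\pair{x,a^\vee}\ge 0\ \forall a\in\Delta\}$ uses that $C$ is nonempty, which holds because $\Delta^\vee$ is linearly independent; this is implicit in the paper's decomposition $\ol C=\bigsqcup_X F_X$ as well.
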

\begin{lem}
	For $X\subseteq\Delta$ s.t. $|W_X|<\infty$, we have $$S_X=\bigcup_{w\in W_X}\bigcup_{Y\subseteq X}wF_Y$$
\end{lem}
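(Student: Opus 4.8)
The plan is to prove the two inclusions separately. The inclusion $\bigcup_{w\in W_X}\bigcup_{Y\subseteq X}wF_Y\subseteq S_X$ is the easy direction: by definition $S_X$ is the union of all facets whose closure contains $F_X$, so it suffices to check that each $wF_Y$ with $w\in W_X$ and $Y\subseteq X$ is such a facet. Since $Y\subseteq X$ we have $F_X\subseteq\ol{F_Y}$ by the standard fact recalled just before the lemma; applying $w\in W_X$ and using that $w$ fixes $F_X$ pointwise (again by the structure of $W_X$ acting on $F_X$, or directly from Lemma 2.2.4 with $w\in W_X$), we get $F_X=wF_X\subseteq w\ol{F_Y}=\ol{wF_Y}$, so $wF_Y$ indeed contains $F_X$ in its closure and hence lies in $S_X$.

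For the reverse inclusion, take an arbitrary facet $\CF$ with $F_X\subseteq\ol{\CF}$; I must show $\CF=wF_Y$ for some $w\in W_X$ and $Y\subseteq X$. Write $\CF=vF_Z$ for some $v\in W$ and $Z\subseteq\Delta$. First I would argue that $\CF$ meets $I^\circ$ (this is where finiteness of $W_X$ is used: $F_X$ lies in the interior of the Tits cone precisely when $W_X$ is finite, by Looijenga's description of $I^\circ$ recalled in the introduction), and that $\ol{\CF}$ near $F_X$ stays inside $I$; then since $\ol C$ is a fundamental domain for $W$ acting on $I$ (Lemma 2.2.4), after replacing $v$ by another representative I may assume $\ol{\CF}$ meets $\ol C$, in fact that a point of $F_X$ lies in $\ol{vF_Z}$ with the facet $vF_Z$ itself adjacent to $\ol C$. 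Concretely: pick a point $x_0\in F_X$; since $x_0\in\ol{\CF}$ and $\CF$ is a single facet, for all $t$ small the segment from $x_0$ into $\CF$ lies in one chamber-closure, and by Lemma 2.2.4 applied to that chamber we may take $v\in W$ with $vF_Z\subseteq\ol C$ up to translation — more precisely, one shows $x_0\in v\ol C$ forces, via Lemma 2.2.4 (with the roles of $w,X$ there played by $v,X$), that $v\in W_X$. Then $v^{-1}F_X=F_X$ and $v^{-1}\CF=F_Z$ with $F_X\subseteq\ol{F_Z}$, which by the adjacency criterion $F_X\subseteq\ol{F_Z}\iff Z\subseteq X$ gives $Z\subseteq X$. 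Hence $\CF=vF_Z$ with $v\in W_X$, $Z\subseteq X$, as required.

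The main obstacle is the bookkeeping in the reverse inclusion: one must be careful that a facet $\CF$ with $F_X$ in its closure genuinely lies in the Tits cone (so that Lemma 2.2.4 applies), and must correctly promote "a point of $F_X$ lies in some $v\ol C$" to "$v\in W_X$." This is exactly the content of Lemma 2.2.4: if $vF_X\cap\ol C\neq\emptyset$ then $v\in W_X$. So the crux is to choose the representative $v$ of the facet $\CF$ so that $vF_X$ (not merely $\ol{\CF}$) meets $\ol C$; this is arranged by first moving $\CF$ into the fundamental domain and tracking which facet of $\ol C$ it becomes, using that $F_X$ is a common face. Once $v\in W_X$ is established, everything else is the elementary adjacency relation $F_X\subseteq\ol{F_Z}\iff Z\subseteq X$ and the fact that elements of $W_X$ fix $F_X$ pointwise, both already available.
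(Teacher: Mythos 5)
Your proof is correct and its crux is the same as the paper's: given a facet $vF_Z$ with $F_X\subseteq\ol{vF_Z}=v\ol{F_Z}$, one gets $v^{-1}F_X\cap\ol C\neq\emptyset$ and invokes Looijenga's lemma ([Lo] (1.3), the lemma immediately preceding this one) to conclude $v\in W_X$, $v^{-1}F_X=F_X$, and hence $Z\subseteq X$. The paper records only this direction and leaves the easy containment implicit; note that your detour through $I^\circ$ and segments is unnecessary, since a facet is by definition a $W$-translate of some $F_Z\subseteq\ol C$, so the fundamental-domain lemma applies directly.
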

\begin{proof}
	If $F_X\subseteq\ol{wF_Y}=w\ol{F_Y}$, then $w^{-1}F_X\cap\ol C\neq\phi$, so by the above lemma, $w\in W_X$ and $w^{-1}F_X=F_X$, namely $F_X\subseteq\ol{F_Y}$, which implies $Y\subseteq X$. 
\end{proof}
In view of this lemma, we define the set $\Gamma_X:=\bigcup_{Y\subseteq X}F_Y$, then from definition we have $$\Gamma_X=\{x\in V:\pair{x,a^\vee}\geq0\text{ for }a\in X,\,\pair{x,a^\vee}>0\text{ for }a\in \Delta-X\}$$ And lemma 2.2.2 says that if $|W_X|<\infty$, then $S_X=W_X\cdot\Gamma_X$ is the union of all $W_X$-translates of $\Gamma_X$, so $$S_X=\{x\in V:\pair{x,a^\vee}>0\text{ for }a\in \Delta-X\}$$ In particular, $S_X$ is an open subset of $I^\circ$. 
\begin{lem}[\cite{Lo} (1.14)] $I^\circ$ is a disjoint union of facets with finite stabilizers in $W$. Namely, we have 
	$$I^\circ=\bigcup_{w\in W}\bigcup_{X\subseteq\Delta:|W_X|<\infty}wF_X$$ where $W_X$ is the subgroup of $W$ generated by simple reflections corresponding to elements in $X$. Moreover, $w_1F_X=w_2F_X$ iff. $w_1W_X=w_2W_X$. 
\end{lem}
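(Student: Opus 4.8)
The plan is to prove the displayed set equality by the two inclusions and then verify the ``moreover'' clause, which will also yield that the facets occurring are pairwise disjoint. One inclusion and the ``moreover'' clause are formal consequences of Lemmas 2.2.1 and 2.2.2; the substantive part is the reverse inclusion, namely ruling out facets $wF_X$ with $|W_X|=\infty$.

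For $\bigcup_{w}\bigcup_{|W_X|<\infty}wF_X\subseteq I^\circ$ I would argue as follows. If $|W_X|<\infty$ then $F_X\subseteq S_X$, and by the discussion following Lemma 2.2.2 the set $S_X$ is an \emph{open} subset of $I^\circ$. Since each $w\in W$ acts on $V$ as a linear homeomorphism carrying the cone $I$ onto itself, it carries $I^\circ=\mathrm{int}(I)$ onto itself, so $wF_X\subseteq wS_X\subseteq I^\circ$, and one takes unions. For the ``moreover'' clause: if $w_1F_X=w_2F_X$ then $w_2^{-1}w_1F_X=F_X\subseteq\ol C$, so $w_2^{-1}w_1F_X\cap\ol C\neq\phi$ and Lemma 2.2.1 gives $w_2^{-1}w_1\in W_X$, i.e. $w_1W_X=w_2W_X$; conversely every element of $W_X$ fixes $F_X$ pointwise (Lemma 2.2.1), so $w_1W_X=w_2W_X$ forces $w_1F_X=w_2F_X$. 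The same computation applied to facets $w_1F_X$ and $w_2F_Y$ shows they are equal (forcing $X=Y$) or disjoint, which gives the ``disjoint union'' assertion.

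The heart of the matter is $I^\circ\subseteq\bigcup_{w}\bigcup_{|W_X|<\infty}wF_X$. Given $y\in I^\circ$, Lemma 2.2.1 lets me write $y=wx$ with $x$ in a unique $F_X$, so $y\in wF_X$; since $I^\circ$ is $W$-invariant, $x=w^{-1}y\in I^\circ$ too, and it remains to show $|W_X|<\infty$. I would argue by contradiction. If $W_X$ is infinite, then a short length count (a Coxeter group has only finitely many elements of bounded length, while each element of $W_X$ inverts only roots lying in $\Phi_{X,+}$, whose number bounds its length in $W_X$) — or \cite{Kac} — shows $\Phi_{X,+}$ is infinite; this ``$|W_X|=\infty\Rightarrow\Phi_{X,+}$ infinite'' step is the only genuinely non-formal input. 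The corresponding coroots $a^\vee$, $a\in\Phi_{X,+}$, then form an infinite subset of the coroot lattice $\sum_i\BZ a_i^\vee$, which is discrete in $V^*$; hence every bounded region meets it in a finite set, so for any enumeration of distinct such coroots $\beta^\vee_k$ one has $\norm{\beta^\vee_k}\to\infty$. Passing to a subsequence, $\beta^\vee_k/\norm{\beta^\vee_k}$ converges to a unit vector $u\in\mathrm{span}_\BR(X^\vee)$; choosing $d\in V$ with $\pair{d,u}<0$ (possible by nondegeneracy of the pairing) gives $\pair{d,\beta^\vee_k}=\norm{\beta^\vee_k}\pair{d,\beta^\vee_k/\norm{\beta^\vee_k}}\to-\infty$. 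On the other hand each $\beta^\vee_k$ is a nonnegative combination of the coroots $a^\vee$, $a\in X$, all annihilating $x\in F_X$, so $\pair{x,\beta^\vee_k}=0$; and since $x\in\mathrm{int}(I)$ we have $x+\varepsilon d\in I$ for all small $\varepsilon>0$. Then $\pair{x+\varepsilon d,\beta^\vee_k}=\varepsilon\pair{d,\beta^\vee_k}\to-\infty$, so $x+\varepsilon d$ is negative on infinitely many positive real coroots — contradicting the elementary direction of the Tits cone criterion: if $z\in w\ol C$ and $\pair{z,\beta^\vee}<0$ for a positive real coroot $\beta^\vee$, then $\pair{w^{-1}z,w^{-1}\beta^\vee}<0$ with $w^{-1}z\in\ol C$, forcing $w^{-1}\beta^\vee$ to be a negative coroot, i.e. $\beta^\vee$ lies in the $\ell(w)$-element set of positive real coroots inverted by $w^{-1}$; so any point of $I$ is negative on only finitely many positive real coroots. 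Hence $|W_X|<\infty$.

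The main obstacle is thus this reverse inclusion, and within it the only step that is not bookkeeping is ``$W_X$ infinite $\Rightarrow\Phi_{X,+}$ infinite'' (standard, but to be stated cleanly); the perturbation argument and the easy half of the Tits cone criterion are short. A secondary point to handle carefully is the $W$-equivariance reductions, which are what allow every membership test to be carried out against $\ol C$ rather than against an arbitrary chamber $w\ol C$.
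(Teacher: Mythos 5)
Your argument is correct, but it is worth noting that the paper does not actually prove this statement: its ``proof'' is the single line ``This follows from the above lemma,'' which is really a citation of Looijenga (1.14), and at best Lemma 2.2.2 (openness of the stars $S_X$) yields only the easy containment $\bigcup_w\bigcup_{|W_X|<\infty}wF_X\subseteq I^\circ$. You supply the missing substantive half --- that a facet $wF_X$ with $|W_X|=\infty$ cannot meet the interior --- and your route is essentially the classical one (cf.\ Kac, Prop.\ 3.12, and Looijenga's own argument): reduce to $x\in F_X\cap I^\circ$, note $\Phi_{X,+}$ is infinite when $W_X$ is (via the length count $\ell_X(w)=|\Phi(w)|\le|\Phi_{X,+}|$), use discreteness of the coroot lattice to extract coroots $\beta_k^\vee\in\mathrm{span}_{\BZ_{\ge0}}(X^\vee)$ with $\Vert\beta_k^\vee\Vert\to\infty$ and a direction $d$ with $\pair{d,\beta_k^\vee}\to-\infty$, and contradict the fact that every point of $I$ is negative on only finitely many positive real coroots (which you correctly derive from $\ol C$ being nonnegative on all positive coroots and $|\Phi(w)|=\ell(w)$). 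The $W$-equivariance reductions, the ``moreover'' clause via Lemma 2.2.1, and the pairwise disjointness all check out. Two small points to state explicitly in a polished write-up: (i) that $\beta^\vee\in\mathrm{span}_{\BZ_{\ge0}}(X^\vee)$ for every $\beta\in\Phi_{X,+}$ (because $W_X$ preserves $\mathrm{span}(X^\vee)$ and coroots of real roots satisfy $(wa)^\vee=wa^\vee$), which is what makes $\pair{x,\beta_k^\vee}=0$ and the discreteness argument legitimate; and (ii) that the pointwise fixing of $F_X$ by all of $W_X$ in the converse direction of the ``moreover'' clause follows directly from $\pair{x,a^\vee}=0$ for $a\in X$, rather than from Lemma 2.2.1 as stated.
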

\begin{proof}
	This follows from the above lemma. 
\end{proof}
\begin{cor}
	Every point in $\Omega$ is a $W$-translate of a point in $V+iF_X$ for some $W$-finite subset $X\subseteq\Delta$. 
\end{cor}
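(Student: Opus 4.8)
The statement is an immediate consequence of Lemma 2.2.4 together with the definition $\Omega = V + iI^\circ$, so the plan is simply to unwind the definitions. First I would fix an arbitrary point $h \in \Omega$ and write it as $h = x + iy$ with $x \in V$ and $y \in I^\circ$, which is possible precisely because $\Omega = V + iI^\circ = \{x+iy : x\in V,\ y\in I^\circ\}$. The only structural input needed is that the Weyl group $W$, acting $\mathbb{R}$-linearly on $V$, extends $\mathbb{C}$-linearly to $V\otimes\BC = V + iV$ and therefore acts diagonally on the real and imaginary parts: $w(x+iy) = wx + i\,wy$ for all $w \in W$. This is the one point worth stating explicitly, though it is routine.

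Next I would apply Lemma 2.2.4 to the imaginary part $y \in I^\circ$: it gives $w \in W$ and a $W$-finite subset $X \subseteq \Delta$ (i.e.\ $|W_X| < \infty$) with $y \in wF_X$, hence $w^{-1}y \in F_X$. Combining this with the diagonal action, $w^{-1}h = w^{-1}x + i\,w^{-1}y$, where $w^{-1}x \in V$ and $w^{-1}y \in F_X$; therefore $w^{-1}h \in V + iF_X$. Setting $h' := w^{-1}h$, we conclude that $h = w\,h'$ is a $W$-translate of the point $h' \in V + iF_X$ with $X$ a $W$-finite subset of $\Delta$, which is exactly the assertion.

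There is essentially no obstacle here: the entire content of the corollary is packaged into Lemma 2.2.4 (Looijenga's description $I^\circ = \bigcup_{w\in W}\bigcup_{X:\,|W_X|<\infty} wF_X$), and the corollary merely transports that statement from $I^\circ$ to $\Omega$ by taking imaginary parts. The corollary is recorded at this point only because it is the precise form in which the structure of $\Omega$ will be invoked in Section 3, where the extensions $\CC_X(t,h)$ of $\CC(t,h)$ to the domains $V + iW\cdot S_X$ are glued together; there one reduces to the case $h \in V + iF_X$ by a $W$-translation, and this corollary is what licenses that reduction.
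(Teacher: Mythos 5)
Your proof is correct and is exactly the argument the paper intends: the corollary is stated without proof as an immediate consequence of the preceding lemma (Looijenga's decomposition $I^\circ=\bigcup_{w\in W}\bigcup_{|W_X|<\infty}wF_X$), applied to the imaginary part of a point of $\Omega=V+iI^\circ$ via the diagonal $W$-action on $V+iV$. The only nit is a reference slip: that decomposition is Lemma 2.2.3 in the paper's numbering, while 2.2.4 is the corollary itself.
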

\begin{defn}
	A subset $X$ of $\Delta$ is called \textbf{$W$-finite} if $|W_X|<\infty$.
\end{defn}
\subsection{\Poincare Series of a Coxeter Group}Let $(W,S)$ be a Coxeter system with length function $\ell$, with the set $S$ of simple reflections in $W$ finite. For any subset $V\subseteq W$, we define $$V(t):=\sum_{w\in V}t^{\ell(w)}$$ the \textbf{\Poincare series} of $V$. We recall the following fundamental result in the theory of Coxeter groups: 
\begin{prop}[Minimal Length Representatives]
	For any $X \subseteq S$, let $W_X$ be the corresponding \textbf{parabolic subgroup} generated by simple reflections in $X$, define $$W^X:=\{w\in W:\ell(ws)>\ell(w),\,\forall s\in X\}$$ Then $W^X$ is a set of coset representatives of $W/W_X$. Each element $w'\in W^X$ is the unique element of minimal length in the coset $w'W_X$. For arbitrary $w'\in W^X,\,w\in W_X$, we have $\ell(w'w)=\ell(w')+\ell(w)$. 
\end{prop}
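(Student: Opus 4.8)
The plan is to isolate a single additivity fact and derive the whole proposition from it by formal manipulation. The fact is: \emph{if $w'\in W^X$ and $w\in W_X$, then $\ell(w'w)=\ell(w')+\ell(w)$.} Granting this, the bookkeeping runs as follows. Any coset $wW_X$ contains an element $w'$ of minimal length, since lengths are non-negative integers; for every $s\in X$ the element $w's$ lies in the same coset, so $\ell(w's)\ge\ell(w')$, and as $\ell(w's)-\ell(w')$ is odd (multiplication by a generator changes length by exactly $\pm1$) we get $\ell(w's)>\ell(w')$, i.e. $w'\in W^X$. Conversely, if $w'\in W^X$ then by the additivity fact every element $w'w$ of its coset has length $\ell(w')+\ell(w)\ge\ell(w')$, so $w'$ is itself of minimal length; and if $w_1',w_2'\in W^X$ lie in one coset, writing $w_2'=w_1'w$ with $w\in W_X$ gives $\ell(w_1')=\ell(w_2')=\ell(w_1')+\ell(w)$, hence $w=e$ and $w_1'=w_2'$. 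So $W^X$ meets each coset of $W/W_X$ in exactly one element, that element is the unique shortest one in its coset, and the displayed length identity is precisely the additivity fact. The entire content of the proposition is therefore this fact.

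To prove the additivity fact I would induct on $\ell(w)$, working in the standard geometric representation of $(W,S)$, with root system $\Phi=\Phi^+\sqcup(-\Phi^+)$ and simple roots $\{\alpha_s\}_{s\in S}$ (for the Weyl group of a Kac--Moody root system these are just $\Phi_{re}$, $\Phi_{re,+}$, $\Delta$), using two standard facts: (i) for $g\in W$ and $s\in S$, $\ell(gs)>\ell(g)$ iff $g(\alpha_s)>0$; and (ii) since $\Phi=\Phi^+\sqcup(-\Phi^+)$ and the simple roots are linearly independent, a root which is a non-negative combination of simple roots is positive. The case $\ell(w)=0$ is trivial. For the inductive step write a reduced expression $w=s_1\cdots s_k$ with each $s_i$ in $X$ (reduced words of elements of $W_X$ involve only generators in $X$), and put $g=w's_1\cdots s_{k-1}$, so $\ell(g)=\ell(w')+(k-1)$ by induction; it remains to check $\ell(gs_k)=\ell(g)+1$. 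If not, then $g(\alpha_k)<0$ by (i), where $\alpha_k$ is the simple root of $s_k$. But $\beta:=s_1\cdots s_{k-1}(\alpha_k)$ is a positive root (by (i), since $s_1\cdots s_k$ is reduced) lying in the span of the simple roots in $X$, so $\beta=\sum_{s\in X}c_s\alpha_s$ with $c_s\in\BZ_{\ge0}$; applying $w'$ and using $w'(\alpha_s)>0$ for $s\in X$ (as $w'\in W^X$, by (i)) we find $g(\alpha_k)=w'(\beta)=\sum_{s}c_s\,w'(\alpha_s)$ is a non-negative combination of simple roots, hence positive by (ii) --- contradicting $g(\alpha_k)<0$. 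This closes the induction.

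The crux I expect is exactly this induction, and specifically the observation that controlling $\ell(w's)$ only for simple reflections $s\in X$ is not enough: one must know the sign of $w'$ applied to \emph{every} positive root spanned by $X$, and fact (ii) together with the reducedness of $s_1\cdots s_k$ is what supplies this. Everything else --- existence, uniqueness, minimality, and the length formula --- then follows formally as in the first paragraph. (One can avoid the geometric representation at the cost of a slightly longer argument: the same additivity fact follows from the Exchange Condition by induction on $\ell(w)$, or from the cocycle identity for inversion sets $\Phi(\cdot)$ together with the fact --- already invoked in the formal computation (1.4) --- that each element of $W_X$ permutes $\Phi_{re,+}-\Phi_{X,+}$.)
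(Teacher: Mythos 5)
The paper does not actually prove this proposition: it is recalled in Section 2.3 as a standard fact from the theory of Coxeter groups, with no argument supplied, so there is no in-paper proof to compare yours against. Your proof is correct and is essentially the standard textbook one (cf.\ Humphreys, \emph{Reflection Groups and Coxeter Groups}, Sections 1.10 and 5.12, or Bj\"orner--Brenti, Proposition 2.4.4): the whole statement does reduce formally to the additivity $\ell(w'w)=\ell(w')+\ell(w)$, and your induction on $\ell(w)$ via the criterion $\ell(gs)>\ell(g)\Leftrightarrow g(\alpha_s)>0$ together with the dichotomy $\Phi=\Phi^+\sqcup(-\Phi^+)$ is sound, including the key step that $w'(\beta)$ is a non-negative combination of positive roots and hence positive. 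The one ingredient you invoke without proof is that an element of $W_X$ of $W$-length $k$ admits a reduced expression all of whose letters lie in $X$ (equivalently, that $\ell$ restricted to $W_X$ equals the length function of the Coxeter system $(W_X,X)$); this is itself a standard consequence of the deletion condition and is a fair citation, though you could make the argument fully self-contained by inducting on $\ell_X(w)$ instead and recovering $\ell|_{W_X}=\ell_X$ as the special case $w'=e$ of the additivity you prove. One cosmetic remark: for a general Coxeter system the coefficients $c_s$ of a positive root are non-negative reals rather than integers, which changes nothing in your appeal to fact (ii), and in the paper's Kac--Moody setting they are indeed integers.
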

\begin{prop}[\cite{St}]
	$W(t)\in 1+t\BZ[[t]]$ is rational, namely it is a quotient of two polynomials in $\BZ[t]$.  
\end{prop}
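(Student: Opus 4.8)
The plan is to deduce the Proposition from Steinberg's identity in the form $W(t)\cdot f(t)=1$ inside $\BZ[[t]]$, where
\[
f(t):=\sum_{\substack{X\subseteq S\\ |W_X|<\infty}}(-1)^{|X|}\,\frac{t^{N_X}}{W_X(t)}
\]
and $N_X$ is the length of the longest element $w_0^X$ of the finite Coxeter group $W_X$. Granting this identity, the Proposition is immediate: $W(t)=\sum_{w\in W}t^{\ell(w)}$ is a well-defined element of $1+t\BZ_{\geq0}[[t]]\subseteq 1+t\BZ[[t]]$ (for each $n$ there are only finitely many $w$ with $\ell(w)=n$, and $1$ is the unique element of length $0$), while $f(t)$ is a finite $\BZ$-linear combination of rational functions whose denominators $W_X(t)\in\BZ[t]$ have constant term $1$; hence $f(t)$ is a unit of $\BZ[[t]]$ that is a quotient of two polynomials in $\BZ[t]$, and therefore so is $W(t)=f(t)^{-1}$. (Since $W_X(t)=t^{N_X}W_X(t^{-1})$ for finite $W_X$, one has $t^{N_X}/W_X(t)=1/W_X(t^{-1})$, so $f(t)$ is the familiar alternating sum $\sum_X(-1)^{|X|}/W_X(t^{-1})$; this reformulation is not needed in what follows.)

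To establish $W(t)f(t)=1$ I would reindex a sum over cosets. By Proposition~2.3.1 the multiplication map $W^X\times W_X\to W$ is a bijection with $\ell(w'w)=\ell(w')+\ell(w)$ for $w'\in W^X$, $w\in W_X$, so $W(t)=W^X(t)\,W_X(t)$ with $W^X(t)=\sum_{w'\in W^X}t^{\ell(w')}$; hence
\[
W(t)f(t)=\sum_{\substack{X\subseteq S\\ |W_X|<\infty}}(-1)^{|X|}t^{N_X}W^X(t)=\sum_{\substack{X\subseteq S\\ |W_X|<\infty}}\ \sum_{w'\in W^X}(-1)^{|X|}\,t^{\ell(w'w_0^X)},
\]
using $\ell(w'w_0^X)=\ell(w')+N_X$. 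I then substitute $v:=w'w_0^X$. For a fixed spherical $X$, the map $w'\mapsto w'w_0^X$ is injective, and its image is precisely the set of $v\in W$ which are the longest element of the coset $vW_X$ (because $w'$ and $w_0^X$ are the shortest and longest elements of their respective cosets and the lengths add). By the standard description of maximal coset representatives this set equals $\{v\in W:X\subseteq D_R(v)\}$, where $D_R(v):=\{s\in S:\ell(vs)<\ell(v)\}$ is the right descent set; conversely, if $X\subseteq D_R(v)$ then $v$ is the longest element of $vW_X$, which forces $W_X$ finite (in a coset of an infinite parabolic the lengths are unbounded) and then $v$ comes from the unique $w'=vw_0^X\in W^X$. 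Consequently
\[
W(t)f(t)=\sum_{v\in W}t^{\ell(v)}\sum_{X\subseteq D_R(v)}(-1)^{|X|}=\sum_{\substack{v\in W\\ D_R(v)=\emptyset}}t^{\ell(v)}=t^{\ell(1)}=1,
\]
since $\sum_{X\subseteq Y}(-1)^{|X|}$ vanishes unless $Y=\emptyset$ and the identity is the only element with empty right descent set.

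The one nontrivial external input is the characterization "$v$ is the longest element of $vW_X$ $\iff$ $X\subseteq D_R(v)$ (and then $W_X$ is finite)". The direction "$\Rightarrow$" is trivial, but the direction "$\Leftarrow$" — that $vs<v$ for every $s\in X$ already forces $\ell(vw)=\ell(v)-\ell(w)$ for all $w\in W_X$ — is a standard but nontrivial consequence of the Exchange Condition, dual to Proposition~2.3.1; this is the step I expect to demand the most care. All the rest is bookkeeping inside $\BZ[[t]]$, where $t^{N_X}/W_X(t)$, $f(t)$ and $W(t)$ are genuine invertible power series, so no convergence issue arises.
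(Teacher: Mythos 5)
Your argument is correct. Note that the paper itself gives no proof of this proposition; it is quoted from Steinberg's memoir, and what you have written out is essentially Steinberg's own argument (the alternating-sum recursion $\sum_{X\ \mathrm{spherical}}(-1)^{|X|}t^{N_X}W^X(t)=1$, obtained by grouping $W$ into cosets of finite parabolics and cancelling via descent sets). All the steps check out: the factorization $W(t)=W^X(t)W_X(t)$ is exactly Proposition~2.3.1 of the paper, the reindexing $v=w'w_0^X$ is legitimate because for each fixed power of $t$ only finitely many terms occur and the sum over $X\subseteq S$ is finite ($S$ being finite by hypothesis), and the inclusion--exclusion $\sum_{X\subseteq D_R(v)}(-1)^{|X|}=0$ for $v\neq 1$ finishes it. You correctly isolate the one genuinely nontrivial input, namely that $X\subseteq D_R(v)$ forces $W_X$ finite and $v$ maximal in $vW_X$; this is the standard ``maximal coset representative'' statement dual to Proposition~2.3.1 and is where an exchange-condition argument is actually needed. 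Your proof is self-contained where the paper simply defers to the literature, which is a reasonable trade: it is longer, but it makes the rationality constructive (an explicit rational expression $W(t)=f(t)^{-1}$ with numerator and denominator built from the finite $W_X(t)$).
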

\section{Correction Factor as Complex Analytic Functions}
\subsection{Convergence of the $W$-summation with imaginary part in the dominant cone}We first prove locally uniform convergence of the function $$\CC(t,h)=\left(\sum_{w\in W}\prod_{b\in\Phi(w)}\frac{t-e^{2\pi i\pair{h,b^\vee}}}{1-te^{2\pi i\pair{h,b^\vee}}}\right)\left(\prod_{b\in\Phi_{re,+}}\frac{1-te^{2\pi i\pair{h,b^\vee}}}{1-e^{2\pi i\pair{h,b^\vee}}}\right)$$ for $t\in D_r$ and $h$ in the region $V+iC$. Then we prove that it also converges on $W$-translates of $V+iC$ by using another expression of $\CC(t,h)$ in terms of a summation over $W$. 
\begin{prop}
	Let $r$ be the radius of convergence of $W(t)$. For $t\in D_r$ and $h=x+iy\in V+iC$, $$\CC(t,h)=\left(\sum_{w\in W}\prod_{b\in\Phi(w)}\frac{t-e^{2\pi i\pair{h,b^\vee}}}{1-te^{2\pi i\pair{h,b^\vee}}}\right)\left(\prod_{b\in\Phi_{re,+}}\frac{1-te^{2\pi i\pair{h,b^\vee}}}{1-e^{2\pi i\pair{h,b^\vee}}}\right)$$ is absolutely convergent uniformly on compact subsets of $D_r\times(V+iC)$, defining a holomorphic function on the domain $D_r\times(V+iC)$. 
\end{prop}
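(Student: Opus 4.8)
The plan is to estimate the two brackets of $\CC(t,h)$ separately and show each converges absolutely and locally uniformly. Fix a compact set $K \subseteq D_r \times (V + iC)$; by compactness there is $\rho < r$ with $|t| \le \rho$ on $K$, and — crucially — there is a constant $\delta > 0$ such that $\pair{y, a_i^\vee} \ge \delta$ for every simple root $a_i$ and every $(t, x+iy) \in K$. Since $\pair{h, b^\vee} = \pair{x, b^\vee} + i\pair{y, b^\vee}$, we have $|e^{2\pi i \pair{h, b^\vee}}| = e^{-2\pi \pair{y, b^\vee}}$. For a positive real root $b = \sum_i n_i(b)\, a_i$ with all $n_i(b) \ge 0$, this gives $|e^{2\pi i \pair{h, b^\vee}}| \le e^{-2\pi\delta\, \mathrm{ht}(b)}$ where $\mathrm{ht}(b) = \sum_i n_i(b)$ is the height. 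Call this quantity $q_b \le e^{-2\pi\delta\,\mathrm{ht}(b)} =: Q^{\mathrm{ht}(b)}$ with $Q := e^{-2\pi\delta} < 1$. Since there are only finitely many real roots of each height (they are linear combinations of finitely many simple roots with bounded coefficients — or simply: finitely many roots of bounded height because $\Phi_{re}$ has finite intersection with each $\{\mathrm{ht} \le N\}$), the number of $b \in \Phi_{re,+}$ with $\mathrm{ht}(b) = n$ grows at most polynomially, say $\le C n^l$.

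\emph{Second bracket.} Write $\prod_{b \in \Phi_{re,+}} \frac{1 - t e^{2\pi i\pair{h,b^\vee}}}{1 - e^{2\pi i\pair{h,b^\vee}}}$ as $\prod_b \bigl(1 + \frac{(1-t)e^{2\pi i \pair{h,b^\vee}}}{1 - e^{2\pi i\pair{h,b^\vee}}}\bigr)$. For the infinite product to converge absolutely and locally uniformly it suffices that $\sum_b \bigl|\frac{(1-t)e^{2\pi i\pair{h,b^\vee}}}{1 - e^{2\pi i\pair{h,b^\vee}}}\bigr| < \infty$ locally uniformly. On $K$ we have $q_b \le Q^{\mathrm{ht}(b)} \le Q < 1$, so $|1 - e^{2\pi i\pair{h,b^\vee}}| \ge 1 - Q$, and hence the general term is bounded by $\frac{(1+\rho)}{1-Q}\, Q^{\mathrm{ht}(b)}$. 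Summing over $b$, grouping by height, gives $\le \frac{1+\rho}{1-Q}\sum_{n \ge 1} C n^l Q^n < \infty$. So the second bracket converges to a nonvanishing holomorphic function on $D_r \times (V+iC)$, and the denominators never vanish there.

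\emph{First bracket.} Here I compare term-by-term with $W(t) = \sum_{w \in W} t^{\ell(w)}$, which converges for $|t| < r$. For $w \in W$ put $P_w(t,h) = \prod_{b \in \Phi(w)} \frac{t - e^{2\pi i\pair{h, b^\vee}}}{1 - t e^{2\pi i\pair{h, b^\vee}}}$; note $|\Phi(w)| = \ell(w)$. On $K$ each factor satisfies $\bigl|\frac{t - q_b'}{1 - t q_b'}\bigr| \le \frac{|t| + q_b}{1 - |t| q_b} \le \frac{\rho + Q}{1 - \rho Q}$ where $q_b' = e^{2\pi i\pair{h,b^\vee}}$, $q_b = |q_b'| \le Q$. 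Choosing $\delta$ large — i.e. shrinking to the part of $K$ with $\pair{y,a_i^\vee}$ large — would make $Q$ small, but $K$ is a fixed compact set and $Q$ need not be small. The fix: enlarge $\rho$ slightly to $\rho' \in (\rho, r)$ (possible since $\rho < r$) and observe $\frac{\rho + Q}{1 - \rho Q} \le \rho'$ holds once $Q \le \frac{\rho' - \rho}{1 - \rho\rho' \cdot ?}$... more carefully: for fixed $\rho < \rho' < r$, the inequality $\frac{\rho + Q}{1-\rho Q} \le \rho'$ is equivalent to $Q(1 + \rho\rho') \le \rho' - \rho$, i.e. $Q \le \frac{\rho'-\rho}{1+\rho\rho'}$. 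This need not hold for all $b$, but it \emph{does} hold for all $b$ with $\mathrm{ht}(b) \ge N_0$ for some $N_0$ depending only on $K$ and $\rho'$, since $Q^{\mathrm{ht}(b)} \to 0$. There are only finitely many $b \in \Phi_{re,+}$ with $\mathrm{ht}(b) < N_0$, hence only finitely many $w$ for which $\Phi(w)$ meets this finite set — no: that is false, infinitely many $w$ can contain a fixed low root. The genuine argument: $\Phi(w)$ is an inversion set, and any inversion set of size $\ell(w)$ large must contain many roots of large height (a "biconvex" set cannot consist only of low-height roots, since the low-height roots span a finite-type subsystem). Precisely, the set of $b \in \Phi_{re,+}$ with $\mathrm{ht}(b) < N_0$ is finite and the subgroup they "control" is finite, so $|\Phi(w) \cap \{\mathrm{ht} < N_0\}| \le M$ for a constant $M = M(N_0)$ independent of $w$. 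Then
\begin{align*}
|P_w(t,h)| &\le \Bigl(\tfrac{\rho+Q}{1-\rho Q}\Bigr)^{M} \cdot (\rho')^{\,\ell(w) - M} \le A \cdot (\rho')^{\ell(w)}
\end{align*}
with $A = (\frac{\rho+Q}{1-\rho Q}/\rho')^{M}$ a constant depending only on $K$. Summing, $\sum_{w \in W} |P_w(t,h)| \le A\sum_{w} (\rho')^{\ell(w)} = A\cdot W(\rho') < \infty$ since $\rho' < r$. This gives absolute, locally uniform convergence of the first bracket; being a locally uniform limit of holomorphic functions it is holomorphic, and the product of the two brackets is holomorphic on $D_r \times (V+iC)$.

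The main obstacle is the last estimate on the first bracket: bounding $|P_w|$ by a constant times $(\rho')^{\ell(w)}$ uniformly in $w$. The naive bound $\bigl(\frac{\rho+Q}{1-\rho Q}\bigr)^{\ell(w)}$ is useless because $\frac{\rho+Q}{1-\rho Q} > \rho$ in general, so one must exploit that an inversion set $\Phi(w)$ of large cardinality is forced to contain a proportionally large number of high-height roots (for which the factor is genuinely $\le \rho'$), the "bad" low-height roots being bounded in number independent of $w$. I expect this combinatorial input — that $|\Phi(w) \cap \{b : \mathrm{ht}(b) < N\}|$ is bounded uniformly in $w$ — to follow from the fact that the real roots of height $< N$ are finite in number and lie in the (finite-type) sub-root-system they generate, combined with the characterization of inversion sets; this is where the bulk of the technical work lies, and it is the same kind of estimate that will reappear, in more involved form, in the analysis of $\CC_X$.
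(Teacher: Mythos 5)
Your proof is correct and follows essentially the same route as the paper: bound the infinite product by a convergent geometric-type series, and compare the $W$-sum with $W(\rho')$ after isolating a finite set of ``bad'' roots whose factors are merely bounded, using $|\Phi(w)|=\ell(w)$. The combinatorial input you flag at the end as ``the bulk of the technical work'' is in fact trivial --- $\Phi(w)$ is a subset of $\Phi_{re,+}$, so $|\Phi(w)\cap\{b:\mathrm{ht}(b)<N_0\}|\le M$ where $M$ is the cardinality of that finite set, with no convexity or finite-type argument needed; this trivial bound is exactly what the paper uses (its finite set $S$, chosen via $\pair{y,b^\vee}\le N$ rather than by height, plays the role of your low-height roots and also sidesteps your minor conflation of the height of $b$ with that of $b^\vee$).
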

\begin{proof}
	 First note that $r\leq1$ since the series for $W(t)$ diverges at $t=1$. By $h=x+iy\in V+iC$ we have $$|e^{2\pi i\pair{h,b^\vee}}|=e^{-2\pi\pair{y,b^\vee}}<1\text{ for }b\in\Phi_{re,+}$$ so the denominators are all non-zero. We first prove the convergence of the infinite product $$\prod_{b\in\Phi_{re,+}}\frac{1-te^{2\pi i\pair{h,b^\vee}}}{1-e^{2\pi i\pair{h,b^\vee}}}$$  Recall that an infinite product $\prod_{i\in I}(1-a_i)$ is called \textbf{absolutely convergent} if the summation $\sum_{i\in I}|a_i|$ is convergent. If an infinite product is absolutely convergent, then we can freely change the order of the product without affecting the limit of the product. So we consider the infinite product $$\prod_{b\in\Phi_{re,+}}(1-te^{2\pi i\pair{h,b^\vee}})$$ whose absolutely convergence is guaranteed as long as the summation $$\sum_{b\in\Phi_{re,+}}|te^{2\pi i\pair{h,b^\vee}}|=t\sum_{b\in\Phi_{re,+}}e^{-2\pi\pair{y,b^\vee}}$$ is convergent. Since every positive real coroot is a finite sum of the simple coroots in $\Delta^\vee=\{a_1^\vee,\cdots,a_l^\vee\}$, we have \begin{align*}
	&\sum_{b\in\Phi_{re,+}}e^{-2\pi\pair{y,b^\vee}}\leq\sum_{n_1,\cdots,n_l\in\BZ_{\geq0}}e^{-2\pi\pair{y,n_1a_1^\vee+\cdots+n_la_l^\vee}}\\&=\prod_{i=1}^l\left(\sum_{n_i=0}^\infty e^{-2\pi \pair{y,a_i^\vee}n_i}\right)=\prod_{i=1}^l\frac{1}{1-e^{-2\pi\pair{y,a_i^\vee}}}
\end{align*} Hence the infinite product $\prod_{b\in\Phi_{re,+}}(1-te^{2\pi i\pair{h,b^\vee}})$ is absolutely convergent for arbitrary $t\in\BC$ and $h\in V+iC$. For $h$ varying in compact subsets of $V+iC$, the imaginary part $y$ of $h$ will vary in a compact subset of $C$, so the linear functions $\pair{y,a_i^\vee}$ have maximum and minimum and the summation $\sum_{b\in\Phi_{re,+}}|te^{2\pi i\pair{h,b^\vee}}|$ is thus uniformly absolutely convergent on compact subsets of $D_r\times(V+iC)$. By taking $t=1$, we see the product $$\prod_{b\in\Phi_{re,+}}\frac{1-te^{2\pi i\pair{h,b^\vee}}}{1-e^{2\pi i\pair{h,b^\vee}}}$$ is absolutely convergent uniformly on compact subsets because the denominators are non-zero. 
\par Next we prove the absolutely convergence of the summation $$\sum_{w\in W}\prod_{b\in\Phi(w)}\frac{t-e^{2\pi i\pair{h,b^\vee}}}{1-te^{2\pi i\pair{h,b^\vee}}}$$
The idea of the proof is, as the depth of the positive root $b$ grow large, $|e^{2\pi i\pair{h,b^\vee}}|$ becomes small, then $\frac{t-e^{2\pi i\pair{h,b^\vee}}}{1-te^{2\pi i\pair{h,b^\vee}}}$ is close to $t$, so we can try to compare the above sum with $W(t)=\sum_{w\in W}t^{\ell(w)}$. \par So we fix $0<r_0<r_1<r$, let $\ol D_{r_0}$ be the closed disk $\ol D_{r_0}=\{t\in \BC:|t|\leq r_0\}$, let $R$ be a compact subset of $C$, we will prove the above summation is uniformly absolutely convergent for $t\in\ol D_{r_0}$ and $h\in V+iR$. Since $\lim_{z\to0}\frac{t-z}{1-tz}=t$, there exists $c>0$ s.t. for all $|z|<c$ and $t\in \ol D_{r_0}$, we have $$\left|\frac{t-z}{1-tz}\right|<r_1$$ Take $N=-\frac{\log c}{2\pi}$. Note that each positive real coroot is a finite $\BZ_{\geq0}$-linear combination of simple coroots, and for each simple coroot $a_i^\vee$ we have $\min_{y\in R}\pair{y,a_i^\vee}>0$, there exists a finite subset $S\subseteq\Phi_{re,+}$ of positive roots s.t. $\pair{y,b^\vee}>N$ for all $b\in\Phi_{re,+}-S$ and $y\in R$. Then for $b\in\Phi_{re,+}-S$ and $y\in R$ we have $|e^{2\pi i\pair{x,b^\vee}-2\pi \pair{y,b^\vee}}|<c$ and  $$\left|\frac{t-e^{2\pi i\pair{h,b^\vee}}}{1-te^{2\pi i\pair{h,b^\vee}}}\right|=\left|\frac{t-e^{2\pi i\pair{x,b^\vee}-2\pi\pair{y,b^\vee}}}{1-te^{2\pi i\pair{x,b^\vee}-2\pi \pair{y,b^\vee}}}\right|<r_1$$
	\par Since $S$ is finite, there exists $A>0$ s.t. for all $b\in S$ we have $$\left|\frac{t-e^{2\pi i\pair{h,b^\vee}}}{1-te^{2\pi i\pair{h,b^\vee}}}\right|\leq A$$ So when $\ell(w)>S$, the term $$\prod_{b\in\Phi(w)}\left|\frac{t-e^{2\pi i\pair{x,b^\vee}-2\pi\pair{y,b^\vee}}}{1-te^{2\pi i\pair{x,b^\vee}-2\pi \pair{y,b^\vee}}}\right|$$ is bounded by $$r_1^{\ell(w)-|S|}A^{|S|}=r_1^{-|S|}A^{|S|}r_1^{\ell(w)}$$ uniformly for $|t|\leq r_0$ and $y\in R$, since each term is a product of $\ell(w)$ fractions, all but $|S|$ of them are smaller than $r_0$, the rest are smaller than $1$. The summation $$\sum_{w\in W}r_1^{\ell(w)}$$ is convergent since $0<r_1<r$. Thus the summation $$\sum_{w\in W,\ell(w)>|S|}\prod_{b\in\Phi(w)}\left|\frac{t-e^{2\pi i\pair{x,b^\vee}-2\pi\pair{y,b^\vee}}}{1-te^{2\pi i\pair{x,b^\vee}-2\pi \pair{y,b^\vee}}}\right|$$ is  uniformly convergent on $\ol D_{r_0}\times (V+iR)$ by Weierstrass test. So we proved that the summation $\CC(t,h)$ is absolutely convergent uniformly on compact subsets on $D_r\times(V+iC)$, hence defining a holomorphic function on this domain.\end{proof}
\begin{lem}
	For $t\in D_r$ and $h=x+iy\in V+iC$, the sum $$\sum_{w\in W}\prod_{a\in\Phi_{re,+}} \frac{1-te^{2\pi i\pair{h,wa^\vee}}}{1-e^{2\pi i\pair{h,wa^\vee}}}$$ is absolutely convergent uniformly on compact subsets of $D_r\times(V+iC)$, and is equal to $\CC(t,h)$. 
\end{lem}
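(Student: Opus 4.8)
The plan is to reduce the lemma to Proposition 3.1.1 by identifying, term by term, the $w$-summand on the left with the $w$-summand of $\CC(t,h)$. Fix $w\in W$ and $(t,h)=(t,x+iy)\in D_r\times(V+iC)$. Since $W$ permutes the real roots and $w(a^\vee)=(wa)^\vee$ for $a\in\Phi_{re,+}$, I would split the product $\prod_{a\in\Phi_{re,+}}\frac{1-te^{2\pi i\pair{h,wa^\vee}}}{1-e^{2\pi i\pair{h,wa^\vee}}}$ according to the sign of $wa$. The set $\{a\in\Phi_{re,+}:wa<0\}$ is finite of cardinality $\ell(w)$, and the standard description of the inversion set gives $\Phi(w)=\{-wa:a\in\Phi_{re,+},\,wa<0\}$ and $\{wa:a\in\Phi_{re,+},\,wa>0\}=\Phi_{re,+}\setminus\Phi(w)$. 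Hence the factors with $wa>0$ assemble into $\prod_{c\in\Phi_{re,+}\setminus\Phi(w)}\frac{1-te^{2\pi i\pair{h,c^\vee}}}{1-e^{2\pi i\pair{h,c^\vee}}}$, which is the product of Proposition 3.1.1 with the finitely many factors indexed by $\Phi(w)$ deleted, hence absolutely convergent uniformly on compact subsets of $D_r\times(V+iC)$; and for $a$ with $wa<0$, writing $b=-wa\in\Phi_{re,+}$ one has $|e^{2\pi i\pair{h,wa^\vee}}|=e^{2\pi\pair{y,b^\vee}}>1$, so these finitely many factors are holomorphic with non-vanishing denominators. In particular each $w$-summand on the left is a well-defined holomorphic function on $D_r\times(V+iC)$.

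Next I would run the cancellation of the formal computation (1.2), now as an honest identity of convergent products. For $b\in\Phi(w)$ we have $|e^{2\pi i\pair{h,b^\vee}}|<1$ and $|te^{2\pi i\pair{h,b^\vee}}|<|t|<r\le1$, so $1-e^{2\pi i\pair{h,b^\vee}}$, $1-te^{2\pi i\pair{h,b^\vee}}$ and $1-e^{-2\pi i\pair{h,b^\vee}}$ are all nonzero, and clearing $e^{2\pi i\pair{h,b^\vee}}$ through the first fraction gives the elementary identity
$$\frac{1-te^{-2\pi i\pair{h,b^\vee}}}{1-e^{-2\pi i\pair{h,b^\vee}}}=\frac{t-e^{2\pi i\pair{h,b^\vee}}}{1-e^{2\pi i\pair{h,b^\vee}}}=\left(\frac{t-e^{2\pi i\pair{h,b^\vee}}}{1-te^{2\pi i\pair{h,b^\vee}}}\right)\left(\frac{1-te^{2\pi i\pair{h,b^\vee}}}{1-e^{2\pi i\pair{h,b^\vee}}}\right).$$
The factors with $wa<0$ are exactly those with $wa^\vee=-b^\vee$, $b\in\Phi(w)$; so, using that absolutely convergent infinite products can be rearranged and that finite subproducts may be split off and remultiplied, I obtain, writing $P(t,h):=\prod_{a\in\Phi_{re,+}}\frac{1-te^{2\pi i\pair{h,a^\vee}}}{1-e^{2\pi i\pair{h,a^\vee}}}$ (the absolutely convergent product of Proposition 3.1.1, in particular locally bounded on $D_r\times(V+iC)$), for every $w\in W$
$$\prod_{a\in\Phi_{re,+}}\frac{1-te^{2\pi i\pair{h,wa^\vee}}}{1-e^{2\pi i\pair{h,wa^\vee}}}=\left(\prod_{b\in\Phi(w)}\frac{t-e^{2\pi i\pair{h,b^\vee}}}{1-te^{2\pi i\pair{h,b^\vee}}}\right)P(t,h).$$

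Finally I would sum this identity over $w\in W$. Taking moduli gives $\left|\prod_{a\in\Phi_{re,+}}\frac{1-te^{2\pi i\pair{h,wa^\vee}}}{1-e^{2\pi i\pair{h,wa^\vee}}}\right|=|P(t,h)|\cdot\left|\prod_{b\in\Phi(w)}\frac{t-e^{2\pi i\pair{h,b^\vee}}}{1-te^{2\pi i\pair{h,b^\vee}}}\right|$, and since $\sum_{w\in W}\prod_{b\in\Phi(w)}\frac{t-e^{2\pi i\pair{h,b^\vee}}}{1-te^{2\pi i\pair{h,b^\vee}}}$ converges absolutely, uniformly on compact subsets of $D_r\times(V+iC)$ (this is established in the proof of Proposition 3.1.1) and $|P(t,h)|$ is locally bounded there, the series $\sum_{w\in W}\prod_{a\in\Phi_{re,+}}\frac{1-te^{2\pi i\pair{h,wa^\vee}}}{1-e^{2\pi i\pair{h,wa^\vee}}}$ converges absolutely, uniformly on compact subsets of $D_r\times(V+iC)$; summing the per-$w$ identity and factoring out $P(t,h)$ gives
$$\sum_{w\in W}\prod_{a\in\Phi_{re,+}}\frac{1-te^{2\pi i\pair{h,wa^\vee}}}{1-e^{2\pi i\pair{h,wa^\vee}}}=\left(\sum_{w\in W}\prod_{b\in\Phi(w)}\frac{t-e^{2\pi i\pair{h,b^\vee}}}{1-te^{2\pi i\pair{h,b^\vee}}}\right)P(t,h)=\CC(t,h).$$
I do not expect a real obstacle here: the only care needed is the bookkeeping identifying $\{-wa:a\in\Phi_{re,+},\,wa<0\}$ with $\Phi(w)$ — the same combinatorics that makes (1.2) valid — and the upgrade of (1.2) from a formal manipulation to an identity of convergent products, which is legitimate precisely because, by Proposition 3.1.1, every product involved is absolutely convergent, so finite subproducts may be freely split off, rearranged, and remultiplied.
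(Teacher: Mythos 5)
Your proposal is correct and follows essentially the same route as the paper: split each $w$-product according to the sign of $wa$, identify the negated factors with the inversion set $\Phi(w)$ to upgrade the formal computation (1.2) to an identity of absolutely convergent products, and then sum over $w$ using the convergence already established in Proposition 3.1.1. The only difference is cosmetic (you spell out the elementary fraction identity and the local boundedness of $P(t,h)$ slightly more explicitly).
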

\begin{proof}
	For each $w\in W$ we first prove that the term $$\prod_{a\in\Phi_{re,+}}\frac{1-te^{2\pi i\pair{h,wa^\vee}}}{1-e^{2\pi i\pair{h,wa^\vee}}}$$ is absolutely convergent uniformly on compact subsets of $D_r\times(V+iC)$, hence defines a holomorphic function on this domain. By the formal computation (1.2) we have
	\begin{align*}
		&\prod_{a\in\Phi_{re,+}}\frac{1-te^{2\pi i\pair{h,wa^\vee}}}{1-e^{2\pi i\pair{h,wa^\vee}}}=\left(\prod_{a\in\Phi_{re,+},wa<0}\frac{1-te^{2\pi i\pair{h,wa^\vee}}}{1-e^{2\pi i\pair{h,wa^\vee}}}\right)\left(\prod_{a\in\Phi_{re,+},wa>0}\frac{1-te^{2\pi i\pair{h,wa^\vee}}}{1-e^{2\pi i\pair{h,wa^\vee}}}\right)\\&=\left(\prod_{b\in\Phi_{re,+},w^{-1}b<0}\frac{1-te^{-2\pi i\pair{h,b^\vee}}}{1-e^{-2\pi i\pair{h,b^\vee}}}\right)\left(\prod_{b\in\Phi_{re,+},w^{-1}b>0}\frac{1-te^{2\pi i\pair{h,b^\vee}}}{1-e^{2\pi i\pair{h,b^\vee}}}\right)\\&=\left(\prod_{b\in\Phi(w)}\frac{1-te^{-2\pi i\pair{h,b^\vee}}}{1-e^{-2\pi i\pair{h,b^\vee}}}\cdot\frac{1-e^{2\pi i\pair{h,b^\vee}}}{1-te^{2\pi i\pair{h,b^\vee}}}\right)\left(\prod_{b\in\Phi_{re,+}}\frac{1-te^{2\pi i\pair{h,b^\vee}}}{1-e^{2\pi i\pair{h,b^\vee}}}\right)\\&=\left(\prod_{b\in\Phi(w)}\frac{t-e^{2\pi i\pair{h,b^\vee}}}{1-te^{2\pi i\pair{h,b^\vee}}}\right)\left(\prod_{b\in\Phi_{re,+}}\frac{1-te^{2\pi i\pair{h,b^\vee}}}{1-e^{2\pi i\pair{h,b^\vee}}}\right)
	\end{align*}
Since RHS is absolutely convergent uniformly on compact subsets of $D_r\times(V+iC)$ by proposition 3.1.1, so is the LHS. Moreover, we have $$\sum_{w\in W}\prod_{a\in\Phi_{re,+}} \frac{1-te^{2\pi i\pair{h,wa^\vee}}}{1-e^{2\pi i\pair{h,wa^\vee}}}=\left(\sum_{w\in W}\prod_{b\in\Phi(w)}\frac{t-e^{2\pi i\pair{h,b^\vee}}}{1-te^{2\pi i\pair{h,b^\vee}}}\right)\left(\prod_{b\in\Phi_{re,+}}\frac{1-te^{2\pi i\pair{h,b^\vee}}}{1-e^{2\pi i\pair{h,b^\vee}}}\right)$$ also because the absolutely convergency of RHS, so LHS is equal to $\CC(t,h)$ and has the same convergency properties. 
\end{proof}
\begin{cor}
	Let $r$ be the radius of convergence of $W(t)$. For $t\in D_r$ and $h=x+iy$ with $y\in W\cdot C$, the sum $$\CC(t,h)=\sum_{w\in W}\prod_{a\in\Phi_{re,+}}\frac{1-te^{2\pi i\pair{h,wa^\vee}}}{1-e^{2\pi i\pair{h,wa^\vee}}}$$ is absolutely convergent uniformly on compact subsets of $D_r\times (V+iW\cdot C)$, defining a holomorphic function on the domain $D_r\times(V+iW\cdot C)$. 
\end{cor}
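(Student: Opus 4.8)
The plan is to deduce the corollary from Lemma~3.1.2 (and its proof) purely by transporting that result along the $W$-action on $V+iV$, so that no new estimate is needed. Write $P_w(t,h)=\prod_{a\in\Phi_{re,+}}\frac{1-te^{2\pi i\pair{h,wa^\vee}}}{1-e^{2\pi i\pair{h,wa^\vee}}}$ for the general term of the displayed sum. The first step is a reindexing identity. If $h=x+iy$ with $y$ lying in the chamber $\sigma C$ (distinct $W$-translates of $C$ are disjoint, so $\sigma$ is determined by $y$; cf.\ Lemma~2.2.1), then substituting $w\mapsto\sigma w$, together with the defining relation $\pair{h,\sigma wa^\vee}=\pair{\sigma^{-1}h,wa^\vee}$ for the $W$-action on $V^*$ extended $\BC$-linearly, gives $P_{\sigma w}(t,h)=P_w(t,\sigma^{-1}h)$ with $\sigma^{-1}h=\sigma^{-1}x+i\sigma^{-1}y\in V+iC$. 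Thus the family $\{P_w(t,h)\}_{w\in W}$ is, after this bijective relabeling, exactly the family appearing in Lemma~3.1.2 evaluated at $\sigma^{-1}h\in V+iC$: in particular each term is a well-defined value, $\sum_{w\in W}P_w(t,h)$ is absolutely convergent, and it equals $\CC(t,\sigma^{-1}h)$. Taking $\sigma=e$ recovers Lemma~3.1.2, so this is genuinely an extension of $\CC(t,-)$ from $V+iC$.

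Next I would promote this pointwise statement to local uniformity. Let $K\subseteq D_r\times(V+iW\cdot C)$ be compact. Its image under $(t,h)\mapsto\Im h$ is a compact subset of the open set $W\cdot C=\bigcup_{\sigma\in W}\sigma C$, hence is covered by finitely many distinct chambers $\sigma_1 C,\dots,\sigma_n C$. Put $K_j=K\cap\bigl(D_r\times(V+i\sigma_j C)\bigr)$. The $\sigma_j C$ are pairwise disjoint and cover all imaginary parts occurring in $K$, so the $K_j$ form a finite partition of $K$ into sets that are open in $K$, hence also closed in $K$, hence compact, with $K_j\subseteq D_r\times(V+i\sigma_j C)$. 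The map $(t,h)\mapsto(t,\sigma_j^{-1}h)$ is a biholomorphism of $D_r\times(V+i\sigma_j C)$ onto $D_r\times(V+iC)$ carrying $K_j$ to a compact set, and by the first step it intertwines $\sum_w P_w$ on $K_j$ with the sum of Lemma~3.1.2 on the image. Lemma~3.1.2 thus yields absolute convergence of $\sum_w P_w(t,h)$ uniform over $K_j$, and summing the finitely many $j$ gives the same over $K$. For holomorphy I would observe that no denominator $1-e^{2\pi i\pair{h,wa^\vee}}$ vanishes on $V+iW\cdot C$: indeed $\Im\pair{h,wa^\vee}=\pair{y,wa^\vee}$, and with $y$ in a chamber and $wa^\vee$ a real coroot (hence strictly positive or strictly negative on that chamber) this quantity is never zero; transporting the holomorphy of the individual terms from Lemma~3.1.2 as above, each $P_w$ is holomorphic on $D_r\times(V+iW\cdot C)$, so $\CC(t,h)=\sum_w P_w(t,h)$, a locally uniform limit of holomorphic partial sums, is holomorphic there.

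I do not expect a genuine analytic obstacle: all the analysis already lives in Lemma~3.1.2, and the corollary is $W$-equivariant transport plus a finite-cover argument. The only point requiring a little care is organizational. An individual term $P_w(t,h)$ for $w$ not aligned with the chamber of $y$ is an infinite product some of whose factors have modulus $\geq 1$ (those indexed by the finitely many positive roots $a$ with $\sigma^{-1}wa<0$), so it is not obviously a convergent product; but this is a non-issue, since only finitely many factors are affected and, in any case, the whole argument is phrased through the relabeling $w\mapsto\sigma w$, after which one is literally back in the setting of Lemma~3.1.2. Consequently the proof of the corollary is short; the genuinely new work of the article will be the analytic continuation of $\CC(t,h)$ beyond $D_r\times(V+iW\cdot C)$, carried out later in Section~3.
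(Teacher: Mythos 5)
Your proposal is correct and follows essentially the same route as the paper: write $h$ as a $W$-translate of a point of $V+iC$, reindex the sum over $W$, and invoke Lemma~3.1.2. The paper compresses the local-uniformity step into ``the rest is clear,'' whereas you spell it out via the finite partition of a compact set by the (disjoint, open) chambers $\sigma_j C$; this is a legitimate filling-in of the same argument, not a different one.
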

\begin{proof}
\par 	Write $h=w'^{-1}h_0$ with $h_0=x_0+iy_0$ for $x_0\in V,y_0\in C$. Then we have $$\sum_{w\in W}\prod_{a\in\Phi_{re,+}}\frac{1-te^{2\pi i\pair{h,wa^\vee}}}{1-e^{2\pi i\pair{h,wa^\vee}}}=\sum_{w\in W}\prod_{a\in\Phi_{re,+}}\frac{1-te^{2\pi i\pair{h_0,w'wa^\vee}}}{1-e^{2\pi i\pair{h_0,w'wa^\vee}}}$$ which is equal to a change of order of the summation $$\sum_{w\in W}\prod_{a\in\Phi_{re,+}}\frac{1-te^{2\pi i\pair{h_0,wa^\vee}}}{1-e^{2\pi i\pair{h_0,wa^\vee}}}$$ which is absolutely convergent by lemma 3.1.2. So one can change the order of summation. The rest is clear. 
\end{proof}
\subsection{A lemma on Kac-Moody root systems}We have defined a holomorphic function $\CC(t,h)$ on $D_r\times(V+iW\cdot C)$. The next task is to find an analytic continuation of $\CC(t,h)$ to $D_r\times\Omega$. We will use a factorization similar to (3.1) to define the candidate for the analytic continuation on the "boundary components", and prove the convergence of it. Our method is similar to the proof of prop 3.1.1, but is more techinical. The following lemma is crucial for the proof of convergence. 
\begin{lem}
	\begin{enumerate}
		\item For $W$-finite $X\subseteq\Delta$, assume $\Delta=\{a_1,\cdots,a_l\}$ and $X=\{a_1,\cdots,a_k\}$, then for each tuple $(p_{k+1},\cdots,p_l)$ of non-negative integers, there are only finitely many choices of non-negative integers $n_1,\cdots,n_k$ s.t. $n_1a_1+\cdots+n_ka_k+p_{k+1}a_{k+1}+\cdots+p_la_l$ is a root in $\Phi_{re,+}$. Moreover, this number is of polynomial growth in $p_{k+1},\cdots,p_l$. 
		\item For any compact subset $R\subseteq\Gamma_X$ and $N>0$ there exists a finite subset $S\subseteq\Phi_{re,+}^\vee$ of positive coroots such that $\pair{y,b^\vee}>N$ for all $b^\vee\in\Phi_{re,+}^\vee-S$ and $y\in R$. 
	\end{enumerate}
\end{lem}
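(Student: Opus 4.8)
For part (1), the plan is to work with the root system $(V, X)$ as a sub-root system of $(V, \Delta)$ and use the standard fact that real roots have bounded height growth in a controlled way. Fix a tuple $(p_{k+1}, \dots, p_l)$ and consider the set of $(n_1, \dots, n_k)$ such that $\beta = \sum_{i=1}^k n_i a_i + \sum_{j=k+1}^l p_j a_j \in \Phi_{re,+}$. The key observation is that since $X$ is $W$-finite, the root system $\Phi_X := \Phi_{re} \cap (\bigoplus_{i\le k} \BZ a_i)$ is a \emph{finite} root system (the real roots of the Kac-Moody system $(V, X)$, whose Weyl group $W_X$ is finite). Now if $\beta \in \Phi_{re,+}$, write $\beta = \beta_X + \gamma$ where $\beta_X = \sum n_i a_i$ and $\gamma = \sum_{j>k} p_j a_j$ is fixed. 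I would argue that the $X$-component $\beta_X$ is constrained: pairing $\beta$ against the coweights dual to the $a_i^\vee$ ($i \le k$) and using that $\beta$, being a real root, lies in the $W$-orbit of a simple root — hence has bounded coordinates in any fixed direction once the complementary coordinates $p_j$ are fixed. More concretely, a real root $\beta$ satisfies $\pair{\beta, \beta^\vee} = 2$ and its reflection $s_\beta$ preserves $\Phi_{re}$; combined with the fact that the set of real roots with fixed projection to $V/\mathrm{span}(X)$ forms a single $W_X$-orbit translate, which is finite since $W_X$ is finite. This gives finiteness. For the polynomial growth claim, I would bound the number of such $\beta$ by the number of lattice points in a region cut out by finitely many linear inequalities (coming from $\beta$ being dominated, in height, by a fixed multiple of $\sum p_j$), whose volume grows polynomially in the $p_j$; alternatively bound it by $|W_X|$ times a polynomial, or simply by the number of $W_X$-translates of a single highest-weight-type representative, which is at most $|W_X|$ — a \emph{constant} — but since one must also allow $\beta$ to range over finitely many $W_X$-orbits whose count grows with $\sum p_j$, a crude polynomial bound (e.g.\ $(1 + \sum_j p_j)^k$) suffices and follows from the height estimate $\mathrm{ht}(\beta) = \mathrm{ht}(\beta_X) + \sum p_j$ together with the boundedness of $\mathrm{ht}(\beta_X)$ by a linear function of $\sum p_j$.

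For part (2), the plan is essentially the same estimate as in the proof of Proposition 3.1.1, but now $y$ ranges over a compact $R \subseteq \Gamma_X$ rather than $R \subseteq C$, so $\pair{y, a_i^\vee}$ may vanish for $a_i \in X$ and is only guaranteed positive for $a_i \in \Delta - X$. Write a positive real coroot as $b^\vee = \sum_{i=1}^k n_i a_i^\vee + \sum_{j=k+1}^l p_j a_j^\vee$ with all coefficients in $\BZ_{\ge0}$. Then $\pair{y, b^\vee} = \sum_{i\le k} n_i \pair{y, a_i^\vee} + \sum_{j>k} p_j \pair{y, a_j^\vee} \ge \sum_{j>k} p_j \pair{y, a_j^\vee} \ge \delta \sum_{j>k} p_j$, where $\delta = \min_{j > k}\min_{y\in R}\pair{y, a_j^\vee} > 0$ by compactness of $R$ and the defining inequalities of $\Gamma_X$. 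Hence $\pair{y,b^\vee} > N$ as soon as $\sum_{j>k} p_j > N/\delta$. So the only coroots $b^\vee$ that can fail the bound $\pair{y,b^\vee} > N$ are those with $\sum_{j>k} p_j \le N/\delta$, and by part (1) (applied to the dual root system, or directly: for each of the finitely many tuples $(p_{k+1},\dots,p_l)$ with $\sum p_j \le N/\delta$ there are only finitely many completions $(n_1,\dots,n_k)$ giving a root) there are only finitely many such $b^\vee$. Take $S$ to be this finite set.

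The main obstacle is part (1) — specifically, establishing finiteness of the number of real roots with prescribed projection onto the $X^c := \Delta - X$ coordinates. The cleanest route is to invoke the structure theory: the orthogonal-style decomposition showing that a real root $\beta$ of $(V,\Delta)$ with fixed $X^c$-part lies in a single coset under the finite group $W_X$ of a fixed element, using that $W_X$ permutes $\Phi_{re,+} \setminus \Phi_{X,+}$ (a fact already invoked in the formal computation (1.6) of the introduction) and acts on the $X$-coordinates with finite orbits because it is finite. Once finiteness is in hand, the polynomial-growth refinement is routine: it reduces to counting lattice points under linear constraints, and any gross over-count by a polynomial in $p_{k+1}, \dots, p_l$ is acceptable since it will be absorbed against the geometric factor $r_1^{\ell(w)}$ in the convergence argument of the next subsection, exactly as in the proof of Proposition 3.1.1.
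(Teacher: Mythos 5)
Your part (2) is essentially identical to the paper's: split $b^\vee=\sum_{i\le k}n_ia_i^\vee+\sum_{j>k}p_ja_j^\vee$, use compactness of $R$ to get a uniform lower bound $\delta>0$ on $\pair{y,a_j^\vee}$ for $j>k$, conclude that only finitely many tuples $(p_{k+1},\dots,p_l)$ can violate $\pair{y,b^\vee}>N$, and invoke part (1) to see each such tuple contributes finitely many coroots. No issue there.

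Part (1), however, has a genuine gap, and it sits exactly at what you yourself call ``the main obstacle.'' Everything hinges on showing that, for fixed $(p_{k+1},\dots,p_l)$, the admissible $X$-coordinates $(n_1,\dots,n_k)$ are bounded (and bounded linearly in $\sum_j p_j$, for the polynomial-growth refinement). You assert this twice without proof: once via ``$\beta$ lies in the $W$-orbit of a simple root, hence has bounded coordinates in any fixed direction,'' which does not follow ($W$-conjugacy to a simple root places no a priori bound on coordinates), and once via the claim that the real roots with fixed complementary part form ``a single $W_X$-orbit translate.'' The latter is false in general: that set is $W_X$-stable (since $W_X$ fixes the $\Delta-X$ coordinates), but it can decompose into many orbits, and a $W_X$-stable set is finite only if the number of orbits is finite --- which is precisely the statement to be proved. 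Your later concession that the orbit count ``grows with $\sum_j p_j$'' and your appeal to ``the boundedness of $\mathrm{ht}(\beta_X)$ by a linear function of $\sum p_j$'' simply restate the needed estimate rather than establish it. The paper closes this gap representation-theoretically: it forms the $\fg_X$-module $M=\bigoplus_{n_1,\dots,n_k}\fg_{n_1a_1+\cdots+n_ka_k+p_{k+1}a_{k+1}+\cdots+p_la_l}$, notes it is integrable over the finite-dimensional semisimple Levi $\fg_X$, hence a sum of finite-dimensional lowest-weight modules with anti-dominant integral lowest weights, and then shows anti-dominance of $\lambda+\sum n_ia_i$ forces $(n_1,\dots,n_k)^TA_X(n_1,\dots,n_k)\le-\sum_in_i\pair{a_i^\vee,\lambda}$; positive-definiteness of $A_X$ (valid because $X$ is of finite type) then bounds $\max_i n_i$ linearly in $\lambda$, and the Weyl dimension formula converts this into a polynomial bound on $\dim M$. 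If you want to avoid representation theory, you could instead run a quadratic-form argument directly on $(\beta,\beta)$ using a $W$-invariant bilinear form --- real roots have one of finitely many positive norms, and positive-definiteness of the form on $\mathrm{span}(X)$ bounds $\|\beta_X\|$ linearly in $\|\gamma\|$ --- but that requires symmetrizability, which the paper does not assume, and in any case some such quantitative input must be supplied; as written your proof does not contain it.
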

\begin{proof}
		\textit{(1).} Let $$\fg=\fh\oplus\left(\bigoplus_{a\in\Phi}\fg_a\right)$$ be the Kac-Moody Lie algebra corresponding to the Kac-Moody root system $(V,\Delta,V^*,\Delta^\vee)$, let $$\fg_X=\fh_X\oplus\left(\bigoplus_{a\in\Phi_X}\fg_a\right)$$ be the Levi subalgebra corresponding to $X$. Since $|W_X|<\infty$, $\fg_X$ is a finite-dimensional semi-simple Lie algebra. We consider the subspace $$M=\bigoplus_{n_1,\cdots,n_k\in\BZ_{\geq0}}\fg_{n_1a_1+\cdots+n_ka_k+p_{k+1}a_{k+1}+\cdots+p_la_l}$$ of $\fg$. We need to prove that $M$ is actually finite-dimensional. 
			\par Clearly M is an integrable $\fg_X$-module, thus by proposition 3.8 (b) in [Kac], each element in $M$ lies in a finite-dimensional $\fg_X$-submodule of $M$, so $M$ is a direct sum of finite-dimensional irreducible lowest weight $\fg_X$-modules with lowest weight anti-dominant and integral. Let $\lambda\in\fh_X$ be the restriction of the linear functional $p_{k+1}a_{k+1}+\cdots+p_la_l$ on $\fh^*$ to $\fh_X^*$, then the $\fh_X$-weight of $\fg_{n_1a_1+\cdots+n_ka_k+p_{k+1}a_{k+1}+\cdots+p_la_l}$ is $\lambda+n_1a_1+\cdots+n_ka_k$. We claim that for any fixed $\lambda$, there exists only finitely many choices of $n_1,\cdots,n_k\in\BZ_{\geq0}$ s.t. $\lambda+n_1a_1+\cdots+n_ka_k$ is anti-dominant: indeed, let $A_X$ be the Cartan matrix of $\fg_X$, which is of finite type, then $\lambda+n_1a_1+\cdots+n_ka_k$ is anti-dominant means that $$n_1\pair{a_i^\vee,a_1}+\cdots+n_k\pair{a_i^\vee,a_k}\leq-\pair{a_i^\vee,\lambda}$$ for $i=1,\cdots,k$, thus (in this step we used that $n_i\geq0$) \begin{equation}
				\begin{pmatrix}
				n_1 & n_2 & \cdots & n_k
			\end{pmatrix} A_X\begin{pmatrix}
				n_1\\n_2\\\cdots\\n_k
			\end{pmatrix}\leq\sum_{i=1}^k-n_i\pair{a_i^\vee,\lambda}
			\end{equation} Since $A_X$ is of finite type, the quadratic form $v\mapsto v^TA_Xv$ is positive-definite, the number of integer solution of this inequality is finite. So our claim is verified, which implies that $M$ has only finitely many anti-dominant weights, each of which is of finite multiplicity because the root multiplicities of $\fg$ are finite. So $M$ can have only finitely many irreducible lowest weight submodules, which together with the assertion at the beginning of this paragraph implies that $M$ is finite-dimensional. 
			\par To see the dimension of $M$ is of polynomial growth in $p_{k+1},\cdots,p_l$, we first note that the dimension of $M$ is bounded by the sum of dimensions of all the lowest weight modules with lowest weights the solutions of (3.1). Recall that we demand all the $n_i$'s to be non-negative integers. There exists a constant $C>0$, depending only on $A_X$,  such that $$C\max\{n_1,\cdots,n_k\}^2\leq \begin{pmatrix}
				n_1 & n_2 & \cdots & n_k
			\end{pmatrix} A_X\begin{pmatrix}
				n_1\\n_2\\\cdots\\n_k
			\end{pmatrix}$$ Also we have $$\sum_{i=1}^k -n_i\pair{\lambda,a_i^\vee}\leq\max\{n_1,\cdots,n_k\}\cdot\sum_{i=1}^k|\pair{\lambda,a_i^\vee}|$$ So the solutions of (3.1) are contained in the solutions of $$C\max\{n_1,\cdots,n_k\}^2\leq\left(\sum_{i=1}^k|\pair{\lambda,a_i^\vee}|\right)\cdot \max\{n_1,\cdots,n_k\}$$ whose solutions can be given by $$0\leq n_i\leq C^{-1}\sum_{i=1}^k|\pair{\lambda,a_i^\vee}|,\,i=1,\cdots,k$$ Thus $\dim M$ is bounded by \begin{equation}
				\sum_{0\leq n_1,\cdots,n_k\leq C^{-1}\sum_{i=1}^k|\pair{\lambda,a_i^\vee}|}\prod_{\alpha\in\Phi_{X,+}}\frac{\pair{\rho_X-\lambda-n_1a_1-\cdots-n_ka_k,\alpha^\vee}}{\pair{\rho_X,\alpha^\vee}}
			\end{equation} Here we used the Weyl dimension formula, namely the dimension of the irreducible lowest weight module of $\fg_X$ with anti-dominant integral lowest weight $\mu$ is $$\prod_{\alpha\in\Phi_{X,+}}\frac{\pair{\rho_X-\mu,\alpha^\vee}}{\pair{\rho_X,\alpha^\vee}}$$ where $\Phi_{X,+}$ is the set of positive roots of $\fg_X$, $\rho_X$ is half of the sum of the positive roots in $\Phi_{X,+}$. Clearly (3.2) is a polynomial of $\lambda$ and $C^{-1}\sum_{i=1}^k|\pair{\lambda,a_i^\vee}|$. Since $\lambda$ is of linear dependence to $p_{k+1},\cdots,p_l$, $\dim M$ can be bounded by a polynomial in $p_{k+1},\cdots,p_l$. 
 		\paragraph{\textit{(2)}} We write each positive coroot $b^\vee=n_1a_1^\vee+\cdots+n_la_l^\vee$ with $n_1,\cdots,n_l\in\BZ_{\geq0}$, then for $y\in\Gamma_X$ we have $$\pair{y,b^\vee}=\sum_{i=1}^l n_i\pair{y,a_i^\vee}\geq\sum_{i=k+1}^l n_i\pair{y,a_i^\vee}$$ with each $\pair{y,a_i^\vee}>0$. Since $R$ is compact, the functions $y\mapsto\pair{y,a_i^\vee}$ has maximum and minimum for $y\in R$, let $m_i(R)$ be its minimum, then $m_i(R)>0$ for $i=k+1,\cdots,l$. There exists only finitely many choices of non-negative integers $p_{k+1},\cdots,p_l$ s.t. $\sum_{i=k+1}^l n_im_i(R)\leq N$, let $S$ be the set of all positive coroots $a^\vee=n_1a_1^\vee+\cdots+n_ka_k^\vee+p_{k+1}a_{k+1}^\vee+\cdots+p_la_l^\vee$ such that $p_{k+1},\cdots,p_l$ is one of the previous choices. By part (1) of this lemma, $S$ is finite, and for any positive coroot $b^\vee=n_1a_1^\vee+\cdots+n_ka_k^\vee+n_{k+1}a_{k+1}^\vee+\cdots+n_la_l^\vee$ which does not belong to $S$ and $y\in R$ we have $$\pair{y,b^\vee}=\sum_{i=1}^l n_i\pair{y,a_i^\vee}\geq\sum_{i=k+1}^l n_i\pair{y,a_i^\vee}\geq\sum_{i=k+1}^l n_im_i(R)>N$$ Hence the result. 
\end{proof}
\subsection{Analytic continuation to $\Omega$}In this section we first define an analytic continuation $\CC_X(t,-)$ of $\CC(t,-)$ to the open sets $V+iW\cdot S_X$ for the $W$-finite subsets $X\subseteq\Delta$. We prove that $\CC_X(t,-)$ is $W$-invariant, so by putting the functions $\CC_X$ together, we get a holomorphic function on $\Omega$.
\begin{prop}
		Let $X\subseteq\Delta$ be a $W$-finite subset. For $t\in\BC,h=x+iy\in V+i\Gamma_X$, the infinite product $$\prod_{a\in\Phi_{re,+}}(1-te^{2\pi i\pair{h,a^\vee}})$$ as a function in $t$ and $h$ is absolutely convergent uniformly on compact subsets in $\BC\times(V+i\Gamma_X)$. 
\end{prop}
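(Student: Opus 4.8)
The plan is to mimic the proof of the absolute convergence of the infinite product $\prod_{b\in\Phi_{re,+}}(1-te^{2\pi i\pair{h,b^\vee}})$ from Proposition 3.1.1, the only new difficulty being that the imaginary part $y$ now lies in $\Gamma_X$ rather than the open chamber $C$, so that the linear functionals $\pair{y,a_i^\vee}$ for $a_i\in X$ are merely non-negative rather than strictly positive. Since absolute convergence means $\sum_{b\in\Phi_{re,+}}|te^{2\pi i\pair{h,b^\vee}}| = |t|\sum_{b\in\Phi_{re,+}}e^{-2\pi\pair{y,b^\vee}}<\infty$, and since for $h$ in a compact subset of $\BC\times(V+i\Gamma_X)$ the parameter $|t|$ stays bounded and the functionals $\pair{y,a_i^\vee}$ stay in compact intervals, it suffices to bound $\sum_{b\in\Phi_{re,+}}e^{-2\pi\pair{y,b^\vee}}$ for $y$ in an arbitrary compact $R\subseteq\Gamma_X$ and show the bound is uniform.

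First I would write $\Delta=\{a_1,\dots,a_l\}$ with $X=\{a_1,\dots,a_k\}$ as in Lemma 3.2.1, and expand each positive real coroot as $b^\vee = n_1a_1^\vee+\cdots+n_la_l^\vee$ with $n_i\in\BZ_{\geq0}$. Then
$$\pair{y,b^\vee} = \sum_{i=1}^l n_i\pair{y,a_i^\vee} \geq \sum_{i=k+1}^l n_i\pair{y,a_i^\vee} \geq \sum_{i=k+1}^l n_i m_i(R),$$
where $m_i(R)=\min_{y\in R}\pair{y,a_i^\vee}>0$ for $i>k$, exactly as in part (2) of Lemma 3.2.1. Grouping the sum according to the "outer" exponent tuple $(p_{k+1},\dots,p_l)=(n_{k+1},\dots,n_l)$, I would write
$$\sum_{b\in\Phi_{re,+}}e^{-2\pi\pair{y,b^\vee}} \leq \sum_{p_{k+1},\dots,p_l\geq 0} \#\{b^\vee\in\Phi_{re,+}^\vee \text{ with outer part } (p_{k+1},\dots,p_l)\}\cdot e^{-2\pi\sum_{i>k}p_i m_i(R)}.$$
By part (1) of Lemma 3.2.1, the counting factor is finite and bounded by a polynomial $P(p_{k+1},\dots,p_l)$ whose coefficients depend only on the root system data, not on $R$. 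Hence the series is dominated by $\sum_{p_{k+1},\dots,p_l\geq 0} P(p_{k+1},\dots,p_l)\, e^{-2\pi\sum_{i>k}p_i m_i(R)}$, which converges because a polynomial times a genuinely decaying geometric factor (each $m_i(R)>0$) is summable; explicitly it factors as a finite product of one-variable sums $\sum_{p\geq 0}(\text{polynomial in }p)\,q^p$ with $q=e^{-2\pi m_i(R)}<1$, each convergent.

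For the uniformity on compact subsets of $\BC\times(V+i\Gamma_X)$: given such a compact set, its projection to the $t$-coordinate is bounded by some $r_0$, and its projection to the imaginary part lies in a compact $R\subseteq\Gamma_X$, on which each $m_i(R)>0$. The dominating series above depends on the compact set only through $r_0$ and the finitely many numbers $m_i(R)$, and is manifestly a continuous (decreasing) function of each $m_i(R)>0$, so it stays finite and furnishes a uniform Weierstrass majorant. Therefore $\sum_{b}|te^{2\pi i\pair{h,b^\vee}}|$ converges uniformly on the compact set, which by definition means $\prod_{b\in\Phi_{re,+}}(1-te^{2\pi i\pair{h,b^\vee}})$ is absolutely convergent uniformly on compact subsets of $\BC\times(V+i\Gamma_X)$. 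The main obstacle is precisely the polynomial-growth estimate from Lemma 3.2.1(1): once that is in hand, everything reduces to the elementary fact that $\sum_p p^d q^p<\infty$ for $|q|<1$, and the argument is a routine enhancement of the geometric-series bound used in Proposition 3.1.1.
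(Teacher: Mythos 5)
Your proposal is correct and follows essentially the same route as the paper: expand each positive coroot in simple coroots, group by the exponents of the simple coroots outside $X$, invoke Lemma 3.2.1 to bound the number of terms in each group by a polynomial, and conclude by summability of (polynomial)$\times$(geometric decay), with uniformity controlled by the minima $m_i(R)>0$ on a compact $R\subseteq\Gamma_X$. The only cosmetic difference is that the paper absorbs the polynomial factor into the exponential via a $(1-\delta)$ trick rather than factoring into one-variable sums $\sum_p p^d q^p$, which changes nothing of substance.
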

\begin{proof}
		We still assume $X=\{a_1,\cdots,a_k\}\subseteq \Delta=\{a_1,\cdots,a_l\}$. We need to prove the absolutely convergence of the summation $$\sum_{a\in\Phi_{re,+}}te^{2\pi i\pair{x,a^\vee}-2\pi\pair{y,a^\vee}}$$ namely we need to prove $$\sum_{a\in\Phi_{re,+}}e^{-2\pi\pair{y,a^\vee}}$$ is convergent, uniformly for $y$ in compact subsets of $\Gamma_X$.
		\par So let $R$ be a compact subset of $\Gamma_X$. Each positive root is a $\BZ_{\geq0}$-linear combination of simple roots, so for $y\in\Gamma_X$ we have \begin{align*}
			&\sum_{a\in\Phi_{re,+}}e^{-2\pi\pair{y,a^\vee}}\\&=\sum_{\substack{n_1,\cdots,n_k,p_{k+1},\cdots,p_l\in\BZ_{\geq0}\\n_1a_1^\vee+\cdots+n_ka_k^\vee+p_{k+1}a_{k+1}^\vee+\cdots+p_la_l^\vee\in\Phi_{re,+}}}e^{-2\pi\pair{y, n_1a_1^\vee+\cdots+n_ka_k^\vee+p_{k+1}a_{k+1}^\vee+\cdots+p_la_l^\vee}}\\&\geq\sum_{p_{k+1},\cdots,p_l\in\BZ_{\geq0}}c(p_{k+1},\cdots,p_l)e^{-2\pi\pair{y,p_{k+1}a_{k+1}^\vee+\cdots+p_la_l^\vee}}
		\end{align*} where $c(p_{k+1},\cdots,p_l)$ is the cardinality of the set $$\{(n_1,\cdots,n_k)\in\BZ_{\geq0}^k:n_1a_1^\vee+\cdots+n_ka_k^\vee+p_{k+1}a_{k+1}^\vee+\cdots+p_la_l^\vee\in\Phi_{re,+}\}$$ which is of polynomial growth in $p_{k+1},\cdots,p_l$ by lemma 3.2.1 (1). So there exists $\delta>0,N>0$ s.t. for $p_{k+1},\cdots,p_l>N$, we have $$c(p_{k+1},\cdots,p_l)e^{-2\pi\pair{y,p_{k+1}a_{k+1}^\vee+\cdots+p_la_l^\vee}}\leq e^{-2\pi(1-\delta)\pair{y,p_{k+1}a_{k+1}^\vee+\cdots+p_la_l^\vee}}$$ for all $y\in R$ and the summation of RHS is convergent because it is a product of some geometric progressions. Hence the result. 
\end{proof}
\begin{cor}
	Let $X\subseteq\Delta$ be a $W$-finite subset. For $t\in\BC,h=x+iy\in V+i\Gamma_X$, the infinite product $$\prod_{a\in\Phi_{re,+}-\Phi_{X,+}}\frac{1-te^{2\pi i\pair{h,a^\vee}}}{1-e^{2\pi i\pair{h,a^\vee}}}$$ as a function in $t$ and $h$ is absolutely convergent uniformly on compact subsets of $\BC\times(V+i\Gamma_X)$. 
\end{cor}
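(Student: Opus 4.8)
The plan is to write the product as a quotient of two infinite products, each already controlled by Proposition 3.3.1, and then to verify that the denominators stay uniformly bounded away from zero. We may assume $\Phi_{re,+}-\Phi_{X,+}\neq\emptyset$, since otherwise the product is empty; this forces $X\subsetneq\Delta$, because when $X=\Delta$ one has $\Phi_{X,+}=\Phi_{re,+}$. The ``numerator'' $\prod_{a\in\Phi_{re,+}-\Phi_{X,+}}(1-te^{2\pi i\pair{h,a^\vee}})$ is obtained from the product of Proposition 3.3.1 by deleting the finitely many factors indexed by $\Phi_{X,+}$; since absolute convergence of an infinite product $\prod(1-c_a)$ means convergence of $\sum_a|c_a|$ and $\Phi_{X,+}$ is finite, the associated series of absolute values for the numerator differs from the one for the full product of Proposition 3.3.1 by finitely many continuous terms, hence still converges absolutely, uniformly on compact subsets of $\BC\times(V+i\Gamma_X)$. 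Specializing $t=1$, the ``denominator'' $\prod_{a\in\Phi_{re,+}-\Phi_{X,+}}(1-e^{2\pi i\pair{h,a^\vee}})$ enjoys the same property.

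Next I would produce, for each compact $R\subseteq\Gamma_X$, a constant $\mu=\mu(R)>0$ with $\pair{y,a^\vee}\ge\mu$ for all $a\in\Phi_{re,+}-\Phi_{X,+}$ and all $y\in R$. Assume $\Delta=\{a_1,\dots,a_l\}$ and $X=\{a_1,\dots,a_k\}$ with $k<l$, and write $a^\vee=\sum_{i=1}^l m_ia_i^\vee$ with $m_i\in\BZ_{\ge0}$ (every positive real coroot is such a combination). If $m_i=0$ for every $i>k$, then $a^\vee$ would be supported on $X^\vee$, hence a positive real coroot of the sub-root-system generated by $X^\vee$, i.e.\ $a^\vee\in\Phi_{X,+}^\vee$; since $\vee$ restricts to a bijection $\Phi_{X,+}\to\Phi_{X,+}^\vee$ this would give $a\in\Phi_{X,+}$, a contradiction. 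So $m_j\ge1$ for some $j>k$, whence, for $y\in R$,
\[
\pair{y,a^\vee}\ \ge\ m_j\pair{y,a_j^\vee}\ \ge\ \min_{k<i\le l}\Bigl(\min_{y'\in R}\pair{y',a_i^\vee}\Bigr)\ =:\ \mu(R),
\]
which is positive because $R$ is compact and each function $y\mapsto\pair{y,a_i^\vee}$ with $i>k$ is continuous and strictly positive on $\Gamma_X$. It follows that $|1-e^{2\pi i\pair{h,a^\vee}}|\ge 1-e^{-2\pi\pair{y,a^\vee}}\ge 1-e^{-2\pi\mu(R)}=:\delta>0$ for every such $a$ and every $h=x+iy$ with $y\in R$; in particular the denominators are nonzero, so the quotient product makes sense.

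Finally I would put the pieces together. For $a\in\Phi_{re,+}-\Phi_{X,+}$ one has
\[
1-\frac{1-te^{2\pi i\pair{h,a^\vee}}}{1-e^{2\pi i\pair{h,a^\vee}}}=\frac{(t-1)e^{2\pi i\pair{h,a^\vee}}}{1-e^{2\pi i\pair{h,a^\vee}}},
\]
so on a compact subset of $\BC\times(V+i\Gamma_X)$, where $|t-1|\le C$ for some $C$ and the imaginary parts lie in a compact $R\subseteq\Gamma_X$, the modulus of the left-hand side is at most $C\delta^{-1}e^{-2\pi\pair{y,a^\vee}}=C\delta^{-1}|e^{2\pi i\pair{h,a^\vee}}|$. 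Summing over $a\in\Phi_{re,+}-\Phi_{X,+}$ and invoking the uniform convergence of the denominator series from the first paragraph (Proposition 3.3.1 with $t=1$), a Weierstrass $M$-test shows that $\sum_{a\in\Phi_{re,+}-\Phi_{X,+}}\bigl|1-\tfrac{1-te^{2\pi i\pair{h,a^\vee}}}{1-e^{2\pi i\pair{h,a^\vee}}}\bigr|$ converges uniformly on compact subsets of $\BC\times(V+i\Gamma_X)$, which is the assertion; being a locally uniform limit of the holomorphic partial products, the product defines a holomorphic function there as well. I do not expect a real obstacle in carrying this out; the one step that genuinely uses the root system structure is the uniform positivity in the second paragraph, which hinges on the standard fact that a positive real coroot supported on $X^\vee$ already lies in the sub-root-system $\Phi_{X,+}^\vee$.
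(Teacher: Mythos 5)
Your proof is correct and follows essentially the same route as the paper: observe that the denominator $1-e^{2\pi i\pair{h,a^\vee}}$ can vanish only when $a\in\Phi_{X,+}$ (since $\pair{y,a^\vee}>0$ for $y\in\Gamma_X$ once $a^\vee$ has a simple coroot outside $X^\vee$ in its support), and then reduce the remaining product to Proposition 3.3.1. You supply more detail than the paper's two-line argument does --- in particular the uniform lower bound $|1-e^{2\pi i\pair{h,a^\vee}}|\ge 1-e^{-2\pi\mu(R)}$ on compact sets and the explicit estimate of $\bigl|1-\tfrac{1-te^{2\pi i\pair{h,a^\vee}}}{1-e^{2\pi i\pair{h,a^\vee}}}\bigr|$ via $\tfrac{(t-1)e^{2\pi i\pair{h,a^\vee}}}{1-e^{2\pi i\pair{h,a^\vee}}}$ --- but these are exactly the checks the paper leaves implicit.
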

\begin{proof}
	If $1-e^{2\pi i\pair{h,a^\vee}}=0$, then $\pair{h,a^\vee}=0$, so $a^\vee\in\Phi_{X,+}$. After excluding all the roots in $\Phi_{X,+}$, the denominator will never be zero. The rest is done by proposition 3.3.1. 
\end{proof}
\begin{lem}
	Let $X\subseteq\Delta$ be a $W$-finite subset. Let $r$ be the radius of convergence of $W(t)$, $D_r:=\{z\in\BC:|z|<r\}$. For $t\in D_r$ and $h=x+iy\in V+i\Gamma_X$, the sum $$\sum_{w\in W}\prod_{b\in\Phi(w)}\frac{t-e^{2\pi i\pair{h,b^\vee}}}{1-te^{2\pi i\pair{h,b^\vee}}}$$ as a function in $t$ and $h$ is absolutely convergent uniformly on compact subsets of $D_r\times(V+i\Gamma_X)$.
\end{lem}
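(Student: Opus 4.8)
The plan is to mimic the proof of Proposition 3.1.1, but with the combinatorial input about roots replaced by Lemma 3.2.1. First I would reduce to uniform convergence on a set of the form $\ol D_{r_0}\times(V+iR)$ with $0<r_0<r_1<r$ and $R$ a compact subset of $\Gamma_X$. As in Proposition 3.1.1, the key point is that $\lim_{z\to0}\frac{t-z}{1-tz}=t$ uniformly for $t\in\ol D_{r_0}$, so there is $c>0$ with $\left|\frac{t-z}{1-tz}\right|<r_1$ whenever $|z|<c$ and $|t|\le r_0$; and there is a fixed $\eta>0$ with $\left|\frac{t-z}{1-tz}\right|<1$ whenever $|z|\le\eta$ and $|t|\le r_0$ (since $r_0<1$, one may even take $\eta$ so that the whole disk $|z|\le\eta$ works, shrinking $c$ if necessary so $c\le\eta$). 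Then I apply Lemma 3.2.1(2) twice: once with the threshold $N_1$ chosen so that $\pair{y,b^\vee}>N_1$ forces $|e^{2\pi i\pair{h,b^\vee}}|=e^{-2\pi\pair{y,b^\vee}}<c$, and once with a larger threshold, to get finite subsets $S_1\subseteq S$ of $\Phi_{re,+}^\vee$ with the property that for $b^\vee\notin S$ we have $\left|\frac{t-e^{2\pi i\pair{h,b^\vee}}}{1-te^{2\pi i\pair{h,b^\vee}}}\right|<r_1$, and for $b^\vee\notin S_1$ (in particular for all but finitely many $b^\vee$) the factor is $<1$, uniformly for $t\in\ol D_{r_0}$, $y\in R$.

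Next, since $S$ is finite there is $A>0$ bounding $\left|\frac{t-e^{2\pi i\pair{h,b^\vee}}}{1-te^{2\pi i\pair{h,b^\vee}}}\right|$ for $b^\vee\in S$, $t\in\ol D_{r_0}$, $y\in R$ (here the denominators are nonzero because $y\in\Gamma_X$ and $b\notin\Phi_X$, so $\pair{y,b^\vee}>0$, hence $|e^{2\pi i\pair{h,b^\vee}}|<1$; if one insists $S$ could contain coroots from $\Phi_X$, restrict to $S\subseteq\Phi_{re,+}^\vee-\Phi_{X,+}^\vee$, or just note $h$ ranges in a compact set so continuity gives the bound). Then for $w\in W$ with $\ell(w)>|S|$, the term $\prod_{b\in\Phi(w)}\left|\frac{t-e^{2\pi i\pair{h,b^\vee}}}{1-te^{2\pi i\pair{h,b^\vee}}}\right|$ is a product of $\ell(w)$ fractions, at most $|S|$ of which exceed $r_1$ (those with $b^\vee\in S$, each bounded by $A$) while the remaining at least $\ell(w)-|S|$ are $<r_1$; hence the term is $\le r_1^{-|S|}A^{|S|}\,r_1^{\ell(w)}$. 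Since $0<r_1<r$ the series $\sum_{w\in W}r_1^{\ell(w)}=W(r_1)$ converges, so the Weierstrass $M$-test gives uniform absolute convergence of $\sum_{w\in W}\prod_{b\in\Phi(w)}\frac{t-e^{2\pi i\pair{h,b^\vee}}}{1-te^{2\pi i\pair{h,b^\vee}}}$ on $\ol D_{r_0}\times(V+iR)$. As $r_0<r$ and $R\subseteq\Gamma_X$ were arbitrary, this yields uniform absolute convergence on compact subsets of $D_r\times(V+i\Gamma_X)$, as claimed.

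The main obstacle I anticipate is the same as in Proposition 3.1.1, namely controlling how the ``bad'' factors (those not close to $t$) are distributed among the sets $\Phi(w)$: the clean bound $r_1^{\ell(w)-|S|}A^{|S|}$ requires that, for every $w$, at most $|S|$ of the roots in $\Phi(w)$ lie in the fixed finite set $S$ — which is automatic because $|\Phi(w)\cap S|\le|S|$ — so the only real content is packaging Lemma 3.2.1(2) to produce such a finite $S$ adapted to the threshold $c$. The point where one must be careful is that, unlike the dominant-chamber case where $\pair{y,a_i^\vee}>0$ for all $i$, here $\pair{y,a_i^\vee}=0$ is allowed for $a_i\in X$; this is precisely why Lemma 3.2.1 is phrased in terms of the ``transverse'' coordinates $p_{k+1},\dots,p_l$ and why the polynomial-growth statement in part (1) is needed to absorb the contribution of the directions $a_1,\dots,a_k$ along which $y$ may degenerate. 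Once Lemma 3.2.1(2) is invoked, everything else is the routine estimate above.
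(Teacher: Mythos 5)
Your proposal is correct and follows essentially the same route as the paper's own proof: reduce to $\ol D_{r_0}\times(V+iR)$, use the uniform limit $\frac{t-z}{1-tz}\to t$ to fix a threshold $c$, invoke Lemma 3.2.1(2) to produce the finite exceptional set $S$, bound the exceptional factors by a constant $A$ and the rest by $r_1<r$, and conclude by comparison with $W(r_1)$ via the Weierstrass test. The extra precautions you add (a second, larger threshold and the remark on nonvanishing denominators) are harmless refinements of the same argument.
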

\begin{proof}
	Any compact subset of $D_r\times (V+i\Gamma_X)$ is contained in a set of the form $\ol D_{r_0}\times (V+iR)$ for $0<r_0<r$ and $R\subset\Gamma_X$ where $\ol D_{r_0}=\{t\in\BC:|t|\leq r_0\}$. We will prove uniform absolutely convergence on this set.
	\par Take $r_1\in(r_0,r)$. Since $\lim_{z\to0}\frac{t-z}{1-tz}=t$, there exists $c>0$ s.t. for all $|z|<c$ and $t\in \ol D_{r_0}$, we have $$\left|\frac{t-z}{1-tz}\right|<r_1$$ Take $N=-\frac{\log c}{2\pi}$, by lemma 3.1.1 (2), let $S\subseteq\Phi_{re,+}$ be a finite set of positive roots s.t. $\pair{y,b^\vee}>N$ for all $b\in\Phi_{re,+}-S$ and $y\in R$. Then for $b\in\Phi_{re,+}-S$ and $y\in R$ we have $|e^{2\pi i\pair{x,b^\vee}-2\pi \pair{y,b^\vee}}|<c$ and  $$\left|\frac{t-e^{2\pi i\pair{h,b^\vee}}}{1-te^{2\pi i\pair{h,b^\vee}}}\right|=\left|\frac{t-e^{2\pi i\pair{x,b^\vee}-2\pi\pair{y,b^\vee}}}{1-te^{2\pi i\pair{x,b^\vee}-2\pi \pair{y,b^\vee}}}\right|<r_1$$
	\par Since $S$ is finite, there exists $A>0$ s.t. for all $b\in S$ we have $$\left|\frac{t-e^{2\pi i\pair{h,b^\vee}}}{1-te^{2\pi i\pair{h,b^\vee}}}\right|\leq A$$ So when $\ell(w)>S$, the term $$\prod_{b\in\Phi(w)}\left|\frac{t-e^{2\pi i\pair{x,b^\vee}-2\pi\pair{y,b^\vee}}}{1-te^{2\pi i\pair{x,b^\vee}-2\pi \pair{y,b^\vee}}}\right|$$ is bounded by $$r_1^{\ell(w)-|S|}A^{|S|}=r_1^{-|S|}A^{|S|}r_1^{\ell(w)}$$ uniformly for $|t|\leq r_0$ and $y\in R$, since each term is a product of $\ell(w)$ fractions, all but $|S|$ of them are smaller than $r_0$, the rest are smaller than $1$. The summation $$\sum_{w\in W}r_1^{\ell(w)}$$ is convergent since $0<r_1<r$. Thus the summation $$\sum_{w\in W,\ell(w)>|S|}\prod_{b\in\Phi(w)}\left|\frac{t-e^{2\pi i\pair{x,b^\vee}-2\pi\pair{y,b^\vee}}}{1-te^{2\pi i\pair{x,b^\vee}-2\pi \pair{y,b^\vee}}}\right|$$ is  uniformly convergent on $\ol D_{r_0}\times (V+iR)$ by Weierstrass test. 
\end{proof}
\par For each $W$-finite subset $X\subseteq\Delta$, we introduce the following function $$\CC_X(t,h)=W_X(t)\left(\sum_{w\in W^X}\prod_{b\in\Phi(w)}\frac{t-e^{2\pi i\pair{h,b^\vee}}}{1-te^{2\pi i\pair{h,b^\vee}}}\right)\left(\prod_{b\in\Phi_{re,+}-\Phi_{X,+}}\frac{1-te^{2\pi i\pair{h,b^\vee}}}{1-e^{2\pi i\pair{h,b^\vee}}}\right)$$ By corollary 3.3.2 and lemma 3.3.3 this is a well-defined function on $D_r\times(V+i\Gamma_X)$. Note that for $X=\phi$, we have $\CC_X(t,h)=\CC(t,h)$. 
\begin{prop}
	Let $X$ be a $W$-finite subset of $\Delta$. For $t\in D_r$ and $h\in V+i\Gamma_X$, the summation $$\sum_{w\in W^X}\prod_{a\in\Phi_{re,+}-\Phi_{X,+}}\frac{1-te^{2\pi i\pair{h,wa^\vee}}}{1-e^{2\pi i\pair{h,wa^\vee}}}$$ is absolutely convergent uniformly on compact subsets of $D_r\times(V+i\Gamma_X)$, and is equal to $$\left(\sum_{w\in W^X}\prod_{b\in\Phi(w)}\frac{t-e^{2\pi i\pair{h,b^\vee}}}{1-te^{2\pi i\pair{h,b^\vee}}}\right)\left(\prod_{b\in\Phi_{re,+}-\Phi_{X,+}}\frac{1-te^{2\pi i\pair{h,b^\vee}}}{1-e^{2\pi i\pair{h,b^\vee}}}\right)$$
\end{prop}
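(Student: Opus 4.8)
The plan is to imitate, in the parabolic setting, the two-step argument used for Lemma 3.1.2 and Corollary 3.1.3, with Corollary 3.3.2 and Lemma 3.3.3 now playing the roles Proposition 3.1.1 played there. First I would fix $w\in W^X$ and establish a factorization of the single term $\prod_{a\in\Phi_{re,+}-\Phi_{X,+}}\frac{1-te^{2\pi i\pair{h,wa^\vee}}}{1-e^{2\pi i\pair{h,wa^\vee}}}$. By Corollary 3.3.2 the product $\prod_{b\in\Phi_{re,+}-\Phi_{X,+}}\frac{1-te^{2\pi i\pair{h,b^\vee}}}{1-e^{2\pi i\pair{h,b^\vee}}}$ is absolutely convergent uniformly on compacta of $D_r\times(V+i\Gamma_X)$, which licenses free re-bracketing of such products. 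Splitting the index set $\Phi_{re,+}-\Phi_{X,+}$ according to the sign of $wa$ and using that $w\in W^X$ carries every root of $\Phi_{X,+}$ to a positive root (a standard consequence of Proposition 2.3.1), one sees that the roots $a$ with $wa<0$ all lie in $\Phi_{re,+}-\Phi_{X,+}$ and that $a\mapsto -wa$ identifies them bijectively with $\Phi(w)$; substituting on these finitely many factors via $\frac{1-te^{-z}}{1-e^{-z}}=\frac{t-e^{z}}{1-te^{z}}\cdot\frac{1-te^{z}}{1-e^{z}}$ and recombining exactly as in the formal computations (1.2) and (1.6) yields
$$\prod_{a\in\Phi_{re,+}-\Phi_{X,+}}\frac{1-te^{2\pi i\pair{h,wa^\vee}}}{1-e^{2\pi i\pair{h,wa^\vee}}}=\left(\prod_{b\in\Phi(w)}\frac{t-e^{2\pi i\pair{h,b^\vee}}}{1-te^{2\pi i\pair{h,b^\vee}}}\right)\left(\prod_{b\in\Phi_{re,+}-\Phi_{X,+}}\frac{1-te^{2\pi i\pair{h,b^\vee}}}{1-e^{2\pi i\pair{h,b^\vee}}}\right).$$

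Second, I would bound the two factors separately. The infinite product on the far right is absolutely convergent uniformly on compacta of $D_r\times(V+i\Gamma_X)$ by Corollary 3.3.2, hence locally bounded there. The series $\sum_{w\in W^X}\prod_{b\in\Phi(w)}\frac{t-e^{2\pi i\pair{h,b^\vee}}}{1-te^{2\pi i\pair{h,b^\vee}}}$ is a subseries of the absolutely convergent series of Lemma 3.3.3 --- indeed the Weierstrass majorant $r_1^{\ell(w)-|S|}A^{|S|}$ built in that proof is valid verbatim for $w\in W^X$ --- so it is absolutely convergent uniformly on compacta of $D_r\times(V+i\Gamma_X)$. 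The product of such a series with a locally bounded absolutely convergent infinite product is again absolutely convergent uniformly on compacta, and so defines a holomorphic function; by the factorization just displayed its $w$-th summand is that of $\sum_{w\in W^X}\prod_{a\in\Phi_{re,+}-\Phi_{X,+}}\frac{1-te^{2\pi i\pair{h,wa^\vee}}}{1-e^{2\pi i\pair{h,wa^\vee}}}$, so the latter coincides with it and inherits the asserted absolute convergence and holomorphy.

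The hard part will be the first step. The combinatorics of the parabolic quotient $W/W_X$ is genuinely more delicate than the combinatorics of $W$ used for Lemma 3.1.2 --- for instance $\Phi(w)$ need not be disjoint from $\Phi_{X,+}$ when $w\in W^X$, so one must track carefully which roots get reindexed and which denominators could vanish on $V+i\Gamma_X$ (here one invokes again the support argument behind Corollary 3.3.2). Equally important, the number of factors in the $w$-th term that are not close to $t$ is $\ell(w)$ and so grows with $w$; this is precisely why one cannot estimate by a naive termwise comparison with $W(t)$, but must route the estimate through the uniform bound of Lemma 3.3.3, in which the finite set $S$ absorbs the boundedly many large factors uniformly over the given compact set. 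The last technical point is to secure the absolute convergence of all infinite products involved before any rearrangement is performed, so that the manipulations of the first step are legitimate.
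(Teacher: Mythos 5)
Your proposal follows the paper's own proof of this proposition essentially step for step: split $\Phi_{re,+}-\Phi_{X,+}$ by the sign of $wa$, reindex the inverted factors via $b=-wa$, recombine into the displayed two-factor form, and then majorize the two factors by Corollary 3.3.2 and Lemma 3.3.3. The genuine gap is in the recombination, and it sits exactly at the point you flag yourself when you note that $\Phi(w)$ need not be disjoint from $\Phi_{X,+}$ for $w\in W^X$. After the substitution, the non-inverted factors run over $w\{a\in\Phi_{re,+}-\Phi_{X,+}:wa>0\}=\Phi_{re,+}\setminus\bigl(\Phi(w)\cup w\Phi_{X,+}\bigr)$, so the correct termwise identity is
$$\prod_{a\in\Phi_{re,+}-\Phi_{X,+}}\frac{1-te^{2\pi i\pair{h,wa^\vee}}}{1-e^{2\pi i\pair{h,wa^\vee}}}=\left(\prod_{b\in\Phi(w)}\frac{t-e^{2\pi i\pair{h,b^\vee}}}{1-te^{2\pi i\pair{h,b^\vee}}}\right)\left(\prod_{b\in\Phi_{re,+}-w\Phi_{X,+}}\frac{1-te^{2\pi i\pair{h,b^\vee}}}{1-e^{2\pi i\pair{h,b^\vee}}}\right),$$
with $w\Phi_{X,+}$, not $\Phi_{X,+}$, removed from the second product. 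Replacing $\Phi_{re,+}-w\Phi_{X,+}$ by $\Phi_{re,+}-\Phi_{X,+}$ requires $w\Phi_{X,+}=\Phi_{X,+}$, which fails exactly when $\Phi(w)\cap\Phi_{X,+}\neq\emptyset$; for $w\in W^X$ one only knows $w\Phi_{X,+}\subseteq\Phi_{re,+}$. (The paper's proof asserts $w\Phi_{X,+}=\Phi_{X,+}$ at this step, so you have faithfully reproduced its argument, including this leap.)

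The failure is not cosmetic. Take type $A_2$ with $\Delta=\{a_1,a_2\}$, $X=\{a_1\}$, $w=s_1s_2\in W^X$; then $\Phi(w)=\{a_1,a_1+a_2\}\ni a_1$ and $w\Phi_{X,+}=\{a_2\}$. Writing $u=e^{2\pi i\pair{h,a_1^\vee}}$ and $v=e^{2\pi i\pair{h,a_2^\vee}}$, the left-hand side of your factorization equals $\frac{(t-u)(t-uv)}{(1-u)(1-uv)}$ while the right-hand side equals $\frac{(t-u)(t-uv)(1-tv)}{(1-tu)(1-v)(1-uv)}$, and these agree only on $u=v$. Worse, the factor $\frac{t-u}{1-u}$ in the left-hand side (coming from $a=a_1+a_2$, $wa=-a_1$) shows that the single term indexed by $w=s_1s_2$ already has a pole at points of $V+iF_X\subseteq V+i\Gamma_X$ where $\pair{h,a_1^\vee}\in\BZ$, so the series in the statement is not even termwise finite there, and no uniform absolute convergence argument of the kind you (or the paper) run through Lemma 3.3.3 and Corollary 3.3.2 can apply on all of $V+i\Gamma_X$. "Tracking the reindexing more carefully," as you propose, leads to the $w$-dependent product over $\Phi_{re,+}-w\Phi_{X,+}$ above; the poles along $V+iF_X$ can only cancel after summing over $W^X$ (which is the whole point of introducing $\CC_X$), not term by term, so the proposition as stated --- and hence any termwise proof of it --- needs to be repaired before the analytic continuation argument can proceed.
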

\begin{proof}
	By the definition of $W^X$ (see proposition 2.3.1), for any $w\in W^X,a\in X$ we have $wa>0$, so we have $w\Phi_{X,+}=\Phi_{X,+}$, namely $\Phi(w)\cap\Phi_{X,+}=\phi$, then by similar formal computation as in (1.2), we have \begin{align*}
		&\sum_{w\in W^X}\prod_{a\in\Phi_{re,+}-\Phi_{X,+}}\frac{1-te^{2\pi i\pair{h,wa^\vee}}}{1-e^{2\pi i\pair{h,wa^\vee}}}\\&=\sum_{w\in W^X}\left(\prod_{a\in\Phi_{re,+}-\Phi_{X,+},wa<0}\frac{1-te^{2\pi i\pair{h,wa^\vee}}}{1-e^{2\pi i\pair{h,wa^\vee}}}\right)\left(\prod_{a\in\Phi_{re,+}-\Phi_{X,+},wa>0}\frac{1-te^{2\pi i\pair{h,wa^\vee}}}{1-e^{2\pi i\pair{h,wa^\vee}}}\right)\\&=\sum_{w\in W^X}\left(\prod_{b\in\Phi_{re,+}-\Phi_{X,+},w^{-1}b<0}\frac{1-te^{-2\pi i\pair{h,b^\vee}}}{1-e^{-2\pi i\pair{h,b^\vee}}}\right)\left(\prod_{b\in\Phi_{re,+}-\Phi_{X,+},w^{-1}b>0}\frac{1-te^{2\pi i\pair{h,b^\vee}}}{1-e^{2\pi i\pair{h,b^\vee}}}\right)\\&=\sum_{w\in W^X}\left(\prod_{b\in\Phi(w)}\frac{1-te^{-2\pi i\pair{h,b^\vee}}}{1-e^{-2\pi i\pair{h,b^\vee}}}\cdot\frac{1-e^{2\pi i\pair{h,b^\vee}}}{1-te^{2\pi i\pair{h,b^\vee}}}\right)\left(\prod_{b\in\Phi_{re,+}-\Phi_{X,+}}\frac{1-te^{2\pi i\pair{h,b^\vee}}}{1-e^{2\pi i\pair{h,b^\vee}}}\right)\\&=\left(\sum_{w\in W^X}\prod_{b\in\Phi(w)}\frac{t-e^{2\pi i\pair{h,b^\vee}}}{1-te^{2\pi i\pair{h,b^\vee}}}\right)\left(\prod_{b\in\Phi_{re,+}-\Phi_{X,+}}\frac{1-te^{2\pi i\pair{h,b^\vee}}}{1-e^{2\pi i\pair{h,b^\vee}}}\right)
	\end{align*} since RHS is absolutely convergent on $D_r\times(V+i\Gamma_X)$. 
\end{proof}
By proposition 3.3.4, we have $$\CC_X(t,h)=W_X(t)\sum_{w\in W^X}\prod_{a\in\Phi_{re,+}-\Phi_{X,+}}\frac{1-te^{2\pi i\pair{h,wa^\vee}}}{1-e^{2\pi i\pair{h,wa^\vee}}}$$ 
\begin{prop}
	For $t\in D_r,h\in V+iC$ we have $\CC_X(t,h)=\CC(t,h)$. 
\end{prop}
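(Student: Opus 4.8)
The plan is to deduce the equality $\CC_X(t,h)=\CC(t,h)$ on $D_r\times(V+iC)$ directly from the pieces already assembled: the $W$-summation formula for $\CC$ (Lemma 3.1.2), the coset decomposition $W=\bigsqcup_{w_1\in W^X}w_1W_X$ with additive lengths (Proposition 2.3.1), Macdonald's identity (1.5) for the finite-type root subsystem $(V,X)$, and the $W^X$-summation formula for $\CC_X$ coming from Proposition 3.3.4. Throughout, all the series and products that appear are absolutely convergent on $D_r\times(V+iC)$ by Proposition 3.1.1, Lemma 3.1.2 and Proposition 3.3.4 (the last applies because $V+iC=V+iF_\phi\subseteq V+i\Gamma_X$), so the rearrangements below are legitimate.

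First I would write, using Lemma 3.1.2 and Proposition 2.3.1,
\[
\CC(t,h)=\sum_{w_1\in W^X}\ \sum_{w_2\in W_X}\ \prod_{a\in\Phi_{re,+}}\frac{1-te^{2\pi i\pair{h,w_1w_2a^\vee}}}{1-e^{2\pi i\pair{h,w_1w_2a^\vee}}}.
\]
For fixed $w_1\in W^X$, $w_2\in W_X$ I split the absolutely convergent product over the partition $\Phi_{re,+}=\Phi_{X,+}\sqcup(\Phi_{re,+}-\Phi_{X,+})$; the factor over the finite set $\Phi_{X,+}$ is kept as is, while in the factor over $\Phi_{re,+}-\Phi_{X,+}$ I substitute $a\mapsto w_2a$. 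Since every $w_2\in W_X$ permutes $\Phi_{re,+}-\Phi_{X,+}$ (the positive roots inverted by $w_2$ all lie in $\Phi_{X,+}$) and $(w_2a)^\vee=w_2a^\vee$ for real roots, that factor becomes $\prod_{a\in\Phi_{re,+}-\Phi_{X,+}}\frac{1-te^{2\pi i\pair{h,w_1a^\vee}}}{1-e^{2\pi i\pair{h,w_1a^\vee}}}$ and no longer depends on $w_2$. Hence
\[
\CC(t,h)=\sum_{w_1\in W^X}\Bigl(\ \sum_{w_2\in W_X}\ \prod_{a\in\Phi_{X,+}}\frac{1-te^{2\pi i\pair{h,w_1w_2a^\vee}}}{1-e^{2\pi i\pair{h,w_1w_2a^\vee}}}\ \Bigr)\ \prod_{a\in\Phi_{re,+}-\Phi_{X,+}}\frac{1-te^{2\pi i\pair{h,w_1a^\vee}}}{1-e^{2\pi i\pair{h,w_1a^\vee}}}.
\]

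Next I would collapse the inner sum to $W_X(t)$. Putting $g=w_1^{-1}h$ and using $\pair{h,w_1w_2a^\vee}=\pair{g,w_2a^\vee}$, the inner sum is exactly the left-hand side of Macdonald's identity (1.5), for the finite-type system $(V,X)$, specialized at $g$. Because $\Phi_{X,+}$ is finite, each summand of (1.5) is, after clearing negative exponents, a rational function of $e^{-a_1^\vee},\dots,e^{-a_k^\vee}$ whose denominator is a product of factors $1-e^{-c^\vee}$ with $c\in\Phi_{X,+}$; since (1.5) holds formally, it holds as an identity of such rational functions and may be specialized at any point where these denominators are nonzero. At $g=w_1^{-1}h$ with $h=x+iy$, $y\in C$, one has $\pair{g,c^\vee}=\pair{w_1^{-1}h,c^\vee}=\pair{h,w_1c^\vee}$, whose imaginary part $\pair{y,w_1c^\vee}$ is strictly positive: indeed $w_1c$ is a positive real root for $c\in\Phi_{X,+}$ since $w_1\in W^X$, so $w_1c^\vee$ is a positive real coroot, hence a $\BZ_{\geq0}$-combination of simple coroots, and $\pair{y,a_i^\vee}>0$ for $y\in C$. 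Thus $e^{2\pi i\pair{g,c^\vee}}\neq1$, the specialization is valid, the inner sum equals $W_X(t)$, and we get
\[
\CC(t,h)=W_X(t)\sum_{w_1\in W^X}\ \prod_{a\in\Phi_{re,+}-\Phi_{X,+}}\frac{1-te^{2\pi i\pair{h,w_1a^\vee}}}{1-e^{2\pi i\pair{h,w_1a^\vee}}}.
\]
By Proposition 3.3.4, valid on $V+i\Gamma_X\supseteq V+iC$, the right-hand side is precisely $\CC_X(t,h)$, which completes the argument.

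The step I expect to need the most care is this passage from Macdonald's identity (1.5), stated formally, to its evaluation at the complex points $g=w_1^{-1}h$: one must recognize (1.5) as a genuine identity of rational functions in the $e^{-a_i^\vee}$ and verify that none of its denominators vanish at $g$, which is exactly where the hypothesis $y\in C$ (equivalently $\pair{y,\beta^\vee}\neq0$ for every real root $\beta$) is used essentially. Everything else — regrouping the $W$-sum by $W^X$-cosets, splitting the infinite product over $\Phi_{X,+}\sqcup(\Phi_{re,+}-\Phi_{X,+})$, and reindexing the second factor by the $W_X$-action — is routine bookkeeping with absolutely convergent series and products, for which the necessary convergence is already supplied by Proposition 3.1.1 and Lemma 3.1.2.
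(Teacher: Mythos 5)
Your proof is correct and follows essentially the same route as the paper's: decompose the $W$-sum from Lemma 3.1.2 over the cosets $W=W^X\cdot W_X$, split the product over $\Phi_{X,+}\sqcup(\Phi_{re,+}-\Phi_{X,+})$, use that $W_X$ permutes $\Phi_{re,+}-\Phi_{X,+}$, and collapse the inner $W_X$-sum to $W_X(t)$ via Macdonald's identity. Your extra check that the specialization of (1.5) at $g=w_1^{-1}h$ is legitimate (denominators nonvanishing because $w_1\Phi_{X,+}\subseteq\Phi_{re,+}$ and $y\in C$) is a welcome refinement of a step the paper passes over silently.
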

\begin{proof}
	Because $\CC(t,h)$ is absolutely convergence for $h\in V+iC$, we can change the order of summation and change the order in each infinite product, namely we have \begin{align*}
		&\sum_{w\in W}\prod_{a\in\Phi_{re,+}}\frac{1-te^{2\pi i\pair{h,wa^\vee}}}{1-e^{2\pi i\pair{h,wa^\vee}}}=\sum_{w_1\in W^X}\sum_{w_2\in W_X}\prod_{a\in\Phi_{re,+}}\frac{1-te^{2\pi i\pair{h,w_1w_2a^\vee}}}{1-e^{2\pi i\pair{h,w_1w_2a^\vee}}}\\&=\sum_{w_1\in W^X}\sum_{w_2\in W_X}\left(\prod_{a\in\Phi_{X,+}}\frac{1-te^{2\pi i\pair{h,w_1w_2a^\vee}}}{1-e^{2\pi i\pair{h,w_1w_2a^\vee}}}\right)\cdot\left(\prod_{a\in\Phi_{re,+}-\Phi_{X,+}}\frac{1-te^{2\pi i\pair{h,w_1w_2a^\vee}}}{1-e^{2\pi i\pair{h,w_1w_2a^\vee}}}\right)
	\end{align*}Since $w_2\in W_X$, $\Phi(w_2)\subseteq\Phi_X$, namely $w_2$ permutes the positive roots in $\Phi_{re,+}-\Phi_{X,+}$. Thus the above is equal to 
		$$\sum_{w_1\in W^X}\left(\prod_{a\in\Phi_{re,+}-\Phi_{X,+}}\frac{1-te^{2\pi i\pair{h,w_1a^\vee}}}{1-e^{2\pi i\pair{h,w_1a^\vee}}}\right)\cdot\left(\sum_{w_2\in W_X}\prod_{a\in\Phi_{X,+}}\frac{1-te^{2\pi i\pair{h,w_1w_2a^\vee}}}{1-e^{2\pi i\pair{h,w_1w_2a^\vee}}}\right)$$
		\par  By the identity in \cite{Mac1}, we have $$\sum_{w'\in W_X}w'\left(\prod_{a\in\Phi_{X,+}}\frac{1-te^{-a^\vee}}{1-e^{-a^\vee}}\right)=W_X(t)$$ So $$\sum_{w_2\in W_X}\prod_{a\in\Phi_{X,+}}\frac{1-te^{2\pi i\pair{h,w_1w_2a^\vee}}}{1-e^{2\pi i\pair{h,w_1w_2a^\vee}}}=W_X(t)$$ is constant in $h$. So we have \begin{align*}
			&\sum_{w_1\in W^X}\left(\prod_{a\in\Phi_{re,+}-\Phi_{X,+}}\frac{1-te^{2\pi i\pair{h,w_1a^\vee}}}{1-e^{2\pi i\pair{h,w_1a^\vee}}}\right)\cdot\left(\sum_{w_2\in W_X}\prod_{a\in\Phi_{X,+}}\frac{1-te^{2\pi i\pair{h,w_1w_2a^\vee}}}{1-e^{2\pi i\pair{h,w_1w_2a^\vee}}}\right)\\&=\sum_{w_1\in W^X}\left(\prod_{a\in\Phi_{re,+}-\Phi_{X,+}}\frac{1-te^{2\pi i\pair{h,w_1a^\vee}}}{1-e^{2\pi i\pair{h,w_1a^\vee}}}\right)\cdot W_X(t)=\CC_X(t,h)
	\end{align*}
\end{proof}
\par Recall that $V+iS_X$ is an open set because $S_X$ is open in $V$. By lemma 2.2.2, we have $$V+iS_X=V+i\bigcup_{w\in W_X}\bigcup_{Y\subseteq X}wF_Y=V+i\bigcup_{w\in W_X}w\Gamma_X$$ is the union of all the $W_X$-translates of the set $V+i\Gamma_X$. Also we have $V+iW\cdot S_X=V+iW\cdot \Gamma_X=W\cdot(V+i\Gamma_X)$ is an open set in $\Omega$. 
\begin{prop}
	  The summation \begin{equation}
	  	\sum_{w\in W^X}\prod_{a\in\Phi_{re,+}-\Phi_{X,+}}\frac{1-te^{2\pi i\pair{h,wa^\vee}}}{1-e^{2\pi i\pair{h,wa^\vee}}}
	  \end{equation} is absolutely convergent on $D_r\times(V+iW\cdot S_X)$. So we can extend the domain of $\CC_X(t,h)$ by defining $$\CC_X(t,h)=W_X(t)\sum_{w\in W^X}\prod_{a\in\Phi_{re,+}-\Phi_{X,+}}\frac{1-te^{2\pi i\pair{h,wa^\vee}}}{1-e^{2\pi i\pair{h,wa^\vee}}}$$ for $(t,h)\in D_r\times(V+iW\cdot S_X)$. Then $\CC_X(t,h)$ is a $W_X$-invariant function on $D_r\times(V+iW\cdot S_X)$. 
\end{prop}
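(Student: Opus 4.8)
The plan is to reduce the whole statement to the convergence already proved on $D_r\times(V+i\Gamma_X)$ in Proposition 3.3.4, by exhibiting the $W^X$-sum as (a rearrangement of) the same sum after a $W$-translation — exactly the mechanism used to pass from $V+iC$ to $V+iW\cdot C$ in Corollary 3.1.3, but now one has to carry the parabolic index set $\Phi_{re,+}-\Phi_{X,+}$ and the quotient $W^X$ along. Recall from the discussion preceding the statement that $W\cdot S_X=W\cdot\Gamma_X$ by Lemma 2.2.2, so $V+iW\cdot S_X=W\cdot(V+i\Gamma_X)$; hence every $h$ in this domain is of the form $h=w'h_0$ with $w'\in W$ and $h_0=x_0+iy_0\in V+i\Gamma_X$, and it suffices to understand the series at such points. (It is also worth noting, for the bookkeeping, that $V+iW\cdot S_X$ is the disjoint union of the open pieces $V+iwS_X$ indexed by $W/W_X$, since $gS_X\cap S_X\neq\phi$ forces $g\in W_X$ by Lemma 2.2.1.)

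The first substantive step is a term-by-term comparison. Fix $w'\in W$ and $h_0\in V+i\Gamma_X$. Using $W$-equivariance of the pairing and of $\vee$, $\pair{w'h_0,(wa)^\vee}=\pair{h_0,(w'^{-1}wa)^\vee}$, and using the decomposition $W=W^XW_X$ of Proposition 2.3.1 — write $w'^{-1}w=\sigma(w)\tau(w)$ with $\sigma(w)\in W^X$, $\tau(w)\in W_X$ — the $w$-th term of $\sum_{w\in W^X}\prod_{a\in\Phi_{re,+}-\Phi_{X,+}}\frac{1-te^{2\pi i\pair{w'h_0,(wa)^\vee}}}{1-e^{2\pi i\pair{w'h_0,(wa)^\vee}}}$ equals $\prod_{a\in\Phi_{re,+}-\Phi_{X,+}}\frac{1-te^{2\pi i\pair{h_0,(\sigma(w)\tau(w)a)^\vee}}}{1-e^{2\pi i\pair{h_0,(\sigma(w)\tau(w)a)^\vee}}}$. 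The key combinatorial input is that any $\tau\in W_X$ permutes the set $\Phi_{re,+}-\Phi_{X,+}$ (each generator $s_a$, $a\in X$, only inverts $a$, and adding a multiple of $a\in X$ to a positive root outside $\mathrm{span}\,X$ leaves a positive root outside $\mathrm{span}\,X$); reindexing $a\mapsto\tau(w)^{-1}a$ in the product then turns the $w$-th term at $w'h_0$ into precisely the $\sigma(w)$-th term of the same series at $h_0$. Finally $w\mapsto\sigma(w)$ is a bijection of $W^X$: as $w$ ranges over $W^X$ the cosets $wW_X$, hence $w'^{-1}wW_X$, range over $W/W_X$, and $\sigma(w)$ is the minimal-length element of $w'^{-1}wW_X$. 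So the two series consist of exactly the same terms up to order.

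The proposition then follows. Passing to absolute values in the comparison and invoking the absolute, locally uniform convergence on $D_r\times(V+i\Gamma_X)$ from Proposition 3.3.4 gives absolute convergence of the $W^X$-sum at every point of $D_r\times(V+iW\cdot S_X)$, with $\sum_{w\in W^X}\prod_a\frac{1-te^{2\pi i\pair{w'h_0,(wa)^\vee}}}{1-e^{2\pi i\pair{w'h_0,(wa)^\vee}}}$ equal to the value of the same series at $h_0$; since the identifications $\sigma,\tau$ are group-theoretic and independent of the base point, local uniformity transfers too, so $\CC_X(t,h):=W_X(t)\cdot(\text{this sum})$ is well defined and holomorphic on $D_r\times(V+iW\cdot S_X)$ and restricts to the earlier $\CC_X$ on $D_r\times(V+i\Gamma_X)$. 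For $W_X$-invariance: given $u\in W_X$ and $h=w'h_0$, one has $uh=(uw')h_0$, so the comparison (with $uw'$ in place of $w'$) gives $\CC_X(t,uh)=W_X(t)\cdot(\text{sum at }h_0)=\CC_X(t,h)$; the same computation actually shows invariance under every $w'\in W$, but only $W_X$-invariance is needed here.

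The step I expect to be the main obstacle is this term-by-term comparison: one must verify carefully that $\tau(w)\in W_X$ really permutes $\Phi_{re,+}-\Phi_{X,+}$ and that the factorization $w'^{-1}w=\sigma(w)\tau(w)$ interacts correctly with the product so that the reindexing is legitimate at the level of absolute values — this is what lets both the value and the (locally uniform) convergence transfer without circularity, since we are deducing convergence of the translated series from convergence of the original rather than the other way around. Everything else (the description $V+iW\cdot S_X=W\cdot(V+i\Gamma_X)$, the passage from pointwise to locally uniform bounds, and $W_X$-invariance) is then routine.
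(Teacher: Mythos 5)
Your argument is correct and is essentially the paper's own proof: both rest on the observation that each term of the sum depends only on the coset $wW_X$ (because $W_X$ permutes $\Phi_{re,+}-\Phi_{X,+}$), so that after writing $h=w'h_0$ with $h_0\in V+i\Gamma_X$ the series at $h$ is a rearrangement of the series at $h_0$, whose absolute convergence is Proposition 3.3.4. The paper phrases this as replacing the set of coset representatives $W^X$ by $w'W^X$, while you track the explicit bijection $w\mapsto\sigma(w)$; these are the same computation, and your extra remarks on local uniformity anticipate what the paper defers to the following proposition.
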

\begin{proof}
	Fix $t\in D_r,h\in V+iF_Y$ for some $Y\subseteq X$. First of all, for $w_X\in W_X$ we have $$\prod_{a\in\Phi_{re,+}-\Phi_{X,+}}\frac{1-te^{2\pi i\pair{h,wa^\vee}}}{1-e^{2\pi i\pair{h,wa^\vee}}}=\prod_{a\in\Phi_{re,+}-\Phi_{X,+}}\frac{1-te^{2\pi i\pair{h,ww_Xa^\vee}}}{1-e^{2\pi i\pair{h,ww_Xa^\vee}}}$$ since $w_X$ permutes the positive roots in $\Phi_{re,+}-\Phi_{X,+}$ as in the proof of the prop 3.3.3. So for $w_1,w_2\in W$ lying in the same left $W_X$-coset, we have$$\prod_{a\in\Phi_{re,+}-\Phi_{X,+}}\frac{1-te^{2\pi i\pair{h,w_1a^\vee}}}{1-e^{2\pi i\pair{h,w_1a^\vee}}}=\prod_{a\in\Phi_{re,+}-\Phi_{X,+}}\frac{1-te^{2\pi i\pair{h,w_2a^\vee}}}{1-e^{2\pi i\pair{h,w_2a^\vee}}}$$ Note that $W^X$ is a set of coset representatives of $W/W_X$, and for any $w'\in W$,  $w'W^X$ is also a set of coset representatives of $W/W_X$, so \begin{align*}
		&\CC_X(t,h)=W_X(t)\sum_{w\in W^X}\prod_{a\in\Phi_{re,+}-\Phi_{X,+}}\frac{1-te^{2\pi i\pair{h,wa^\vee}}}{1-e^{2\pi i\pair{h,wa^\vee}}}\\&=W_X(t)\sum_{w\in w_XW^X}\prod_{a\in\Phi_{re,+}-\Phi_{X,+}}\frac{1-te^{2\pi i\pair{h,wa^\vee}}}{1-e^{2\pi i\pair{h,wa^\vee}}}\\&=W_X(t)\sum_{w\in W^X}\prod_{a\in\Phi_{re,+}-\Phi_{X,+}}\frac{1-te^{2\pi i\pair{h,w'wa^\vee}}}{1-e^{2\pi i\pair{h,w'wa^\vee}}}\\&=W_X(t)\sum_{w\in W^X}\prod_{a\in\Phi_{re,+}-\Phi_{X,+}}\frac{1-te^{2\pi i\pair{w'^{-1}h,wa^\vee}}}{1-e^{2\pi i\pair{w'^{-1}h,wa^\vee}}}
	\end{align*} 
	so RHS is absolutely convergent and is equal to $\CC_X(t,w'^{-1}h)$. By the description of $V+iW\cdot S_X$ before this prop, every element in $V+iW\cdot S_X$ is of the form $w'^{-1}h$ for some $w'\in W,h\in V+iF_Y$ for some $Y\subseteq X$. Thus (3.3) is convergent on $D_r\times(V+iW\cdot S_X)$ and $\CC_X(t,h)$ is $W$-invariant. 
\end{proof}
\begin{prop}
	Let $X$ be a $W$-finite subset of $\Delta$. The absolutely convergence of the summation $$\sum_{w\in W^X}\prod_{a\in\Phi_{re,+}-\Phi_{X,+}}\frac{1-te^{2\pi i\pair{h,wa^\vee}}}{1-e^{2\pi i\pair{h,wa^\vee}}}$$ is uniform on compact subsets of  $D_r\times(V+iS_X)$, thus $\CC_X$ is a holomorphic function on $D_r\times(V+iW\cdot S_X)$. 
\end{prop}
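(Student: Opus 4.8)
The plan is to upgrade the pointwise-in-$h$ absolute convergence established in Proposition 3.3.7 to locally uniform convergence, first on $D_r\times(V+i\Gamma_X)$ and then, by the $W_X$-invariance, on all of $D_r\times(V+iS_X)$. Recall from the discussion before Proposition 3.3.7 that $S_X=\bigcup_{w\in W_X}w\Gamma_X$, and that $\CC_X(t,h)=W_X(t)\sum_{w\in W^X}\prod_{a\in\Phi_{re,+}-\Phi_{X,+}}\frac{1-te^{2\pi i\pair{h,wa^\vee}}}{1-e^{2\pi i\pair{h,wa^\vee}}}$ was shown to be $W_X$-invariant there. So the heart of the matter is a single locally-uniform estimate over $V+i\Gamma_X$: once we have that, a compact subset $K\subseteq D_r\times(V+iS_X)$ can be covered by finitely many translates $(\id\times w)(K_w)$ with $w\in W_X$ and $K_w\subseteq D_r\times(V+i\Gamma_X)$ compact, and uniform convergence on each $K_w$ transports to uniform convergence on $(\id\times w)(K_w)$ by the $W_X$-equivariance of the summand; this is exactly the computation carried out in the proof of Proposition 3.3.7. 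The $W$-translates to get all of $V+iW\cdot S_X$ are handled identically, reindexing $W^X$ by $w'W^X$ as in that proof.

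For the locally uniform estimate on $D_r\times(V+i\Gamma_X)$, I would run the same scheme as in Proposition 3.1.1 and Lemma 3.3.3, but now applied to the rewritten form
$$\prod_{a\in\Phi_{re,+}-\Phi_{X,+}}\frac{1-te^{2\pi i\pair{h,wa^\vee}}}{1-e^{2\pi i\pair{h,wa^\vee}}}=\left(\prod_{b\in\Phi(w)}\frac{t-e^{2\pi i\pair{h,b^\vee}}}{1-te^{2\pi i\pair{h,b^\vee}}}\right)\left(\prod_{b\in\Phi_{re,+}-\Phi_{X,+}}\frac{1-te^{2\pi i\pair{h,b^\vee}}}{1-e^{2\pi i\pair{h,b^\vee}}}\right),$$
which is legitimate by Proposition 3.3.4 (note $\Phi(w)\cap\Phi_{X,+}=\phi$ for $w\in W^X$). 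Fix a compact $\ol D_{r_0}\times(V+iR)$ with $0<r_0<r$ and $R\subseteq\Gamma_X$ compact. The second bracket is $h$-independent of $w$ and its uniform absolute convergence is Corollary 3.3.2. For the first bracket, pick $r_1\in(r_0,r)$, choose $c>0$ with $|\tfrac{t-z}{1-tz}|<r_1$ whenever $|z|<c$, $t\in\ol D_{r_0}$, set $N=-\tfrac{\log c}{2\pi}$, and invoke Lemma 3.2.1(2) to get a finite set $S\subseteq\Phi_{re,+}^\vee$ with $\pair{y,b^\vee}>N$ for all $b^\vee\notin S$ and $y\in R$. Then each factor indexed by $b\in\Phi(w)$ with $b^\vee\notin S$ has modulus $<r_1$, the finitely many factors with $b^\vee\in S$ are bounded by some constant $A>0$, and — using $\ell(w)=|\Phi(w)|$ — the $w$-term is bounded by $r_1^{-|S|}A^{|S|}r_1^{\ell(w)}$ for $\ell(w)>|S|$. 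Since $W$ (a fortiori $W^X$) satisfies $\sum_{w\in W^X}r_1^{\ell(w)}\le\sum_{w\in W}r_1^{\ell(w)}<\infty$ because $r_1<r$, the Weierstrass $M$-test gives uniform absolute convergence of the $W^X$-sum on $\ol D_{r_0}\times(V+iR)$, hence on compact subsets of $D_r\times(V+i\Gamma_X)$; multiplying by the entire function $W_X(t)$ preserves this.

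Having uniform-on-compacta convergence of holomorphic terms (each term is holomorphic on $D_r\times(V+i\Gamma_X)$ by Corollary 3.3.2 and the fact that $\prod_{b\in\Phi(w)}\frac{t-e^{2\pi i\pair{h,b^\vee}}}{1-te^{2\pi i\pair{h,b^\vee}}}$ is a finite product of holomorphic functions with nonvanishing denominators on this domain — indeed $|te^{2\pi i\pair{h,b^\vee}}|\le r_0<1$ there), the limit $\CC_X$ is holomorphic on $D_r\times(V+i\Gamma_X)$; by the $W_X$-covering argument of the first paragraph it is holomorphic on $D_r\times(V+iS_X)$, and by the $W$-covering argument on $D_r\times(V+iW\cdot S_X)$. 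The main obstacle is purely the bookkeeping: one must be sure the constant $c$ (hence $S$) can be chosen uniformly over the compact $R\subseteq\Gamma_X$ rather than over the non-compact $\Gamma_X$, and that the finite exceptional set $S$ from Lemma 3.2.1(2) indeed controls all $w\in W^X$ simultaneously — this is where the polynomial-growth half of Lemma 3.2.1 does the real work, exactly as in the proof of Proposition 3.3.1.
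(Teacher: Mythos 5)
Your proposal is correct and follows essentially the same route as the paper: reduce a compact subset of $D_r\times(V+iS_X)$ to one of $D_r\times(V+i\Gamma_X)$ via the finite covering $S_X=W_X\cdot\Gamma_X$, use locally uniform absolute convergence there, and transport it by the $W_X$- (resp.\ $W$-) equivariance of the summand to conclude holomorphy on $D_r\times(V+iS_X)$ and then on $D_r\times(V+iW\cdot S_X)$. The only difference is that you re-derive the uniform estimate on $V+i\Gamma_X$ from the factorization and the Weierstrass bound, whereas the paper simply cites the earlier proposition that already records exactly that statement.
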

\begin{proof}
	We recall that $V+iS_X=V+i\bigcup_{w\in W_X}w\Gamma_X$. A compact set of $D_r\times (V+iS_X)$ is contained in a compact set of the form $\ol D_{r_0}\times E$ where $0<r_0<r$ and $E\subseteq V+iS_X$ is compact. Since $W_X$ is finite, the set $W_X\cdot E$ is also compact. Let $E_0=(W_X\cdot E)\cap (V+i\Gamma_X)$, then $W_X\cdot E=W_X\cdot E_0$ and $E_0\subseteq V+i\Gamma_X$ is compact by the concrete description of $\Gamma_X$ and $S_X$ after lemma 2.2.2. So the convergence of this summation is uniform on $E_0$ by proposition 3.3.4, and the convergence on $W_X\cdot E_0$ is uniform since $W_X$ is finite and this summation is $W$-invariant. 
	\par The uniformness of convergence on compact subsets of $V+iS_X$ implies that $\CC_X$ is holomorphic on $V+iS_X$. Since $\CC_X$ is $W$-invariant, it is also holomorphic on each $W$-translation of $V+iS_X$, so it is holomorphic on $W\cdot(V+iS_X)=V+iW\cdot S_X$. 
\end{proof}
So each $\CC_X(t,h)$ is a holomorphic continuation of $\CC(t,h)$ from $V+iW\cdot C$ to $V+iW\cdot S_X$. This implies that for any $W$-finite $X,X'\subseteq\Delta$ we have $$\CC_X(t,h)=\CC_{X'}(t,h)$$ for $t\in D_r,h\in V+iW\cdot(S_X\cap S_{X'})$, and all these functions together give a holomorphic continuation of $\CC(t,h)$ to the open set $$D_r\times \bigcup_{X\subseteq\Delta:|W_X|<\infty}(V+iW\cdot S_X)$$ Which is equal to the whole $D_r\times\Omega$ by corollary 2.2.4. So we have proved
\begin{thm}
	$\CC(t,h)$ can be analytically continued to a holomorphic function on $D_r\times\Omega$. 
\end{thm}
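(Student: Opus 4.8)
The plan is to build the continuation by gluing the holomorphic functions $\CC_X(t,h)$ of Proposition 3.3.7 over a suitable open cover of $D_r\times\Omega$. First I would observe that, by Corollary 2.2.4 together with the description $S_X=\{x\in V:\pair{x,a^\vee}>0\text{ for }a\in\Delta-X\}$ obtained after Lemma 2.2.2, the sets $V+iW\cdot S_X$ with $X$ ranging over the $W$-finite subsets of $\Delta$ are open and cover $\Omega=V+iI^\circ$; hence the sets $D_r\times(V+iW\cdot S_X)$ form an open cover of $D_r\times\Omega$. On each of them Proposition 3.3.7 already supplies a holomorphic function $\CC_X(t,h)$, and for $X=\phi$ (where $S_\phi=C$) this is exactly $\CC(t,h)$ on $D_r\times(V+iW\cdot C)$. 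So it remains to show the $\CC_X$ agree on overlaps; the glued function is then automatically holomorphic on $D_r\times\Omega$ and extends $\CC$, which is the claim.

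For the overlap compatibility I would first record two facts. (a) For every $W$-finite $X$ one has $C\subseteq S_X$: since $F_X\subseteq\ol{F_\phi}=\ol C$ (Section 2.2), the chamber $C=F_\phi$ is a facet whose closure contains $F_X$, so it belongs to the star $S_X$; hence $V+iW\cdot C\subseteq V+iW\cdot S_X$ for all such $X$. (b) Both $\CC$, in the form given by Corollary 3.1.3, and each $\CC_X$ (Proposition 3.3.6) are $W$-invariant in $h$, and Proposition 3.3.5 gives $\CC_X(t,h)=\CC(t,h)$ for $h\in V+iC$; conjugating by elements of $W$ upgrades this to $\CC_X=\CC$ on all of $D_r\times(V+iW\cdot C)$.

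Then, given two $W$-finite subsets $X,X'$, consider the open set $O=(V+iW\cdot S_X)\cap(V+iW\cdot S_{X'})$, which by (a) contains $V+iW\cdot C$. Recall from the introduction (after \cite{Lo}) that $V+iW\cdot C$ is open and dense in $\Omega$; being contained in the open set $O$, it is therefore dense in $O$ as well. On $D_r\times O$ both $\CC_X$ and $\CC_{X'}$ are holomorphic, hence continuous, and by (b) both coincide with $\CC$ on the dense subset $D_r\times(V+iW\cdot C)$; by continuity they coincide on all of $D_r\times O$. Thus $\{\CC_X\}$ is a compatible family on the open cover $\{D_r\times(V+iW\cdot S_X)\}$ of $D_r\times\Omega$, so it glues to a single function $\wt\CC$ on $D_r\times\Omega$, which is holomorphic because it locally equals one of the $\CC_X$, and which restricts to $\CC_\phi=\CC$ on $D_r\times(V+iW\cdot C)$. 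This $\wt\CC$ is the sought analytic continuation.

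Almost all the work has already been done in Sections 2 and 3.1--3.3 — the structure of $I^\circ$, the root-multiplicity estimate of Lemma 3.2.1, and the locally uniform convergence of the infinite products and the $W$-sums defining $\CC$ and each $\CC_X$ — so this last step is largely bookkeeping. The only point I expect to need care with is that an overlap $O$ need not be connected, which blocks a direct appeal to the identity theorem; the argument above gets around this by using continuity of the $\CC_X$ together with the density of $V+iW\cdot C$ in $O$ instead.
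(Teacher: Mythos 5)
Your proposal is correct and follows essentially the same route as the paper: the paper likewise glues the functions $\CC_X$ over the open cover $\{D_r\times(V+iW\cdot S_X)\}$ of $D_r\times\Omega$ (using Corollary 2.2.4), with each $\CC_X$ agreeing with $\CC$ on $D_r\times(V+iW\cdot C)$ by Proposition 3.3.5 and $W$-invariance. Your explicit justification of the overlap compatibility via continuity and the density of $V+iW\cdot C$ is a welcome refinement of a step the paper merely asserts.
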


\end{document}